\newcommand*\boxnode{\node[shape=rectangle,draw,thick] (char)}
\newcommand*\meagrenode{\node[shape=circle,draw, fill=black!100,thick] (char)}
\newcommand*\fatnode{\node[circle,draw, thick] (char)}
\newcommand{\R}{\mathbb{R}}
\newcommand{\I}{\mathbf{I}}
\newcommand{\J}{\mathbf{J}}
\newcommand{\Lb}{\mathbf{L}}
\newtheorem{remark}{Remark}
\title{LIRK-W: Linearly-Implicit Runge-Kutta Methods with Approximate Matrix Factorization}
\author{Paul Tranquilli \footnotemark[2]\ \footnotemark[5] \and Adrian Sandu \footnotemark[3]\ \footnotemark[5]
\and Hong Zhang \footnotemark[4] \ \footnotemark[5]}
\begin{document}
\thispagestyle{empty}
\setcounter{page}{0}

\makeatletter
\def\Year#1{%
  \def\yy@##1##2##3##4;{##3##4}%
  \expandafter\yy@#1;
}
\makeatother

\begin{Huge}
\begin{center}
Computational Science Laboratory Technical Report CSL-TR-\Year{\the\year}-{\tt 7} \\
\today
\end{center}
\end{Huge}
\vfil
\begin{huge}
\begin{center}
Paul Tranquilli, Adrian Sandu and  Hong Zhang
\end{center}
\end{huge}

\vfil
\begin{huge}
\begin{it}
\begin{center}
``{\tt LIRK-W: Linearly-implicit Runge-Kutta methods with approximate matrix factorization}''
\end{center}
\end{it}
\end{huge}
\vfil

\begin{large}
\begin{center}
Computational Science Laboratory \\
Computer Science Department \\
Virginia Polytechnic Institute and State University \\
Blacksburg, VA 24060 \\
Phone: (540)-231-2193 \\
Fax: (540)-231-6075 \\ 
Email: \url{sandu@cs.vt.edu} \\
Web: \url{http://csl.cs.vt.edu}
\end{center}
\end{large}

\vspace*{1cm}

\begin{tabular}{ccc}
\includegraphics[width=2.5in]{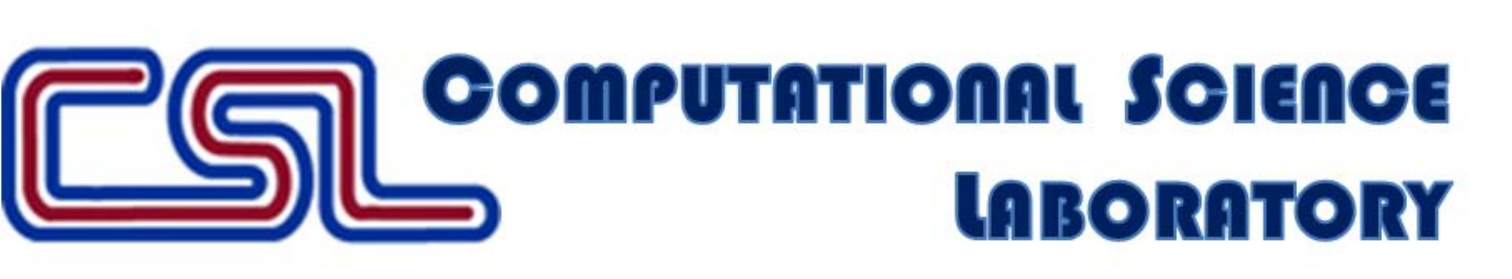}
&\hspace{2.5in}&
\includegraphics[width=2.5in]{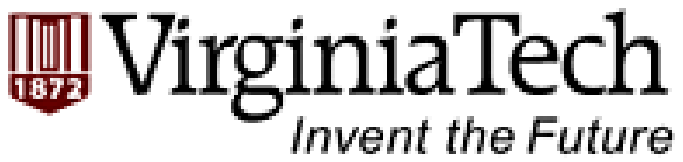} \\
{\bf\em\large Compute the Future} &&\\
\end{tabular}

\newpage
%
%
%
%

\maketitle
\renewcommand{\thefootnote}{\fnsymbol{footnote}}
\footnotetext[2]{ptranq@vt.edu}
\footnotetext[3]{sandu@cs.vt.edu}
\footnotetext[4]{zhang@vt.edu}
\footnotetext[5]{Computational Science Laboratory, Department of Computer Science, Virginia Tech.  2202 Kraft Drive, Blacksburg, Virginia 24060.}
\renewcommand{\thefootnote}{\arabic{footnote}}
\begin{abstract}
This paper develops a new class of linearly implicit time integration schemes called Linearly-Implicit Runge-Kutta-W (LIRK-W) methods.  These schemes are based on an implicit-explicit approach which does not require a splitting of the right hand side and allow for arbitrary, time dependent, and stage varying approximations of the linear systems appearing in the method.  Several formulations of LIRK-W schemes, each designed for specific approximation types, and their associated order condition theories are presented.
\end{abstract}
\section{Introduction}
\label{LIRKW:sec:intro}
A standard approach to the solution of systems of partial differential equations is the method of lines technique, in which a discretization method such as finite differences, finite volumes, or finite elements is used to approximate derivatives in space to arrive at the semi-discrete initial value problem \eqref{LIRKW:eqn:ode}
\begin{equation}
\label{LIRKW:eqn:ode}
 \frac{dy}{dt} = F(y)\,,~~~ t_0 \leq t \leq t_F\,, \quad y(t_0) = y_0\,; \qquad  y(t), ~~F(y) \in \R^N\,.
\end{equation}
The system \eqref{LIRKW:eqn:ode} can be evolved through time using a time integration scheme to approximate solutions at discrete times $t_n < t_i < t_F$.  For problems with stiff dynamics, or where mesh refinement leads to unfortunate Courant-Friedrichs-Lewy (CFL) numbers, explicit methods may not be suitable.

Implicit methods, such as Backwards Differentation or Runge-Kutta methods improve upon the stability of explicit methods, at the cost of requiring one or more non-linear solves at each timestep.  The approximation of solutions of these nonlinear systems represents the bulk of the computational cost of the time integration process.  

Linearly implicit methods, such as Rosenbrock \cite[section IV.7]{Hairer_book_II} or linearly-implicit Runge-Kutta \cite{Grooms_2011,Calvo_2001,Akrivis_2003_IMEX,Akrivis_2004}, schemes replace the need to solve a nonlinear system with the solution of more computationally efficient linear systems.  Once again, however, the cost of approximating the solution of these linear systems represents a disproportionately large percentage of the overall method cost.  Rosenbrock-W \cite[section IV.7]{Hairer_book_II} \cite{Rang_2005_ROW3} methods attempt to alleviate this burden by allowing for the use of arbitrary approximations, and so permit relatively cheap, and inaccurate, solutions of the linear system while maintaining full order of convergence.  Unfortunately, while these methods permit arbitrary approximations, the order condition theory on which they are built does not account for variations in the method of approximation between stages of the method.  

For this reason, many approximation techniques may not be available in the context of Rosenbrock-W methods.  Primary among them is the use of Krylov based iterative solvers, such as GMRES \cite{Saad}, which computes the solution of several different, nearby linear systems when the iteration procedure is truncated early \cite{Tranquilli2016}.  Similarly, making use of an approximate matrix factorization (AMF) to accelerate the solution of the linear systems also leads to a reduction in the order of the Rosenbrock-W scheme.  Several families of time integration schemes (ROWMAP \cite{Weiner_1997_rowmap} and Rosenbrock-Krylov \cite{Tranquilli_2014_ROK}) have been constructed to couple Krylov solvers and Rosenbrock-W methods to alleviate order reduction in the case of GMRES like approximations.

Here we present a new family of time integration schemes, called linearly-implicit Runge-Kutta-W (LIRK-W) methods, based on an implicit-explicit (IMEX) \cite{Ascher_1997} approach, which allow for arbitrary, time dependent, and stage varying approximations of the linear systems appearing in the method.  The rest of the paper is laid out as follows: In section \ref{LIRKW:sec:lirkw} we motivate, and present, the general form of a LIRK-W method; in section \ref{LIRKW:sec:amf} we review the use of AMF to accelerate the solution of linear systems in the context of PDEs; in section \ref{LIRKW:sec:oc} we present the order condition theory for a LIRK-W method; in section \ref{LIRKW:sec:stability} we give linear stability results for a LIRK-W method; in section \ref{LIRKW:sec:construction} we derive a specific LIRK-W method; and finally in section \ref{LIRKW:sec:numerics} we present numerical results.

\section{Linearly-Implicit Runge-Kutta-W (LIRK-W) Methods}
\label{LIRKW:sec:lirkw}

\subsection{Implicit-explicit Runge Kutta methods}

Consider a splitting of the right hand side $F(y)$ of the initial value problem \eqref{LIRKW:eqn:ode}
\begin{equation}
\label{LIRKW:eqn:ode-split}
 \frac{dy}{dt} = F(y) = f(y) + g(y)\,,\quad t_0 \leq t \leq t_F\,, 
\end{equation}
such that $f(y)$ represents slow dynamics, unlikely to impact the stability of the numerical integration, and $g(y)$ contains the fast  dynamics.  An implicit-explicit Runge-Kutta (IMEX-RK) method applies different discretizations to the the two terms, and integrates the non-stiff component $f(y)$ explicitly and stiff component $g(y)$ implicitly \cite{Sandu_2015_GARK}:
\begin{subequations}
\label{LIRKW:eqn:imexrk}
\begin{eqnarray}
\label{LIRKW:eqn:imexrkstage}
Y_i  &=& y_n +   h\, \sum_{j=1}^{i-1}\, a_{i,j}\,   f(Y_j)  +   h\, \sum_{j=1}^i\, \widehat{a}_{i,j}\,   g(Y_j), \quad i=1,\dots,s, \\
\label{LIRKW:eqn:imexrksoln}
y_{n+1} &=&  y_n +   h\, \sum_{j=1}^s\, b_{j}\,   f(Y_j)  +   h\, \sum_{j=1}^s\, \widehat{b}_{j}\,   g(Y_j). 
\end{eqnarray}
\end{subequations}
The fact that only $g(y)$ is treated implicitly has the benefit of reducing the per-timestep cost as compared to implicit Runge-Kutta methods, since one only solves non-linear systems containing $g(y)$ instead of the entire of $F(y)$. In the same time \eqref{LIRKW:eqn:imexrk} has considerably better stability properties than explicit Runge-Kutta schemes since the stiff dynamics is integrated implicitly.  

Unfortunately, the application of \eqref{LIRKW:eqn:imexrk} requires to first partition the system $F(t,y)$ into non-stiff and stiff parts \eqref{LIRKW:eqn:ode-split}, and to provide the Jacobian operator corresponding to the stiff term. These steps can be difficult to achieve for systems implemented in large legacy codes.

\subsection{Arbitrary linear approximations of the stiff term}

Here we propose the new class of Linearly-Implicit Runge-Kutta-W (LIRK-W) time integrators that make use of an alternative partitioning, based on a linear/non-linear splitting of the right hand side operator:
\begin{equation}
\label{LIRKW:eqn:lirkwode}
 \frac{dy}{dt} = \Lb\, y + \left(F(y) - \Lb y\right)\,,~~~ t_0 \leq t \leq t_F\,, \quad y(t_0) = y_n\,; \quad  y(t), F(y) \in \R^N\, , \quad \Lb \in \R^{N \times N}.
\end{equation}
where $\Lb y$ ideally captures the stiff dynamics of $F(y)$.  In this way we will seek to treat implicitly the linear terms $\Lb$ that capture the stiffness of the system, and to treat explicitly the remaining nonlinear part.
The IMEX-RK method \eqref{LIRKW:eqn:imexrk} applied to \eqref{LIRKW:eqn:lirkwode}, after some rearranging of terms, reads: 
\begin{subequations}
\label{LIRKW:eqn:lirkwimex}
\begin{eqnarray}
\left( \I - h\,\widehat{a}_{i,i}\,\Lb\right)\, Y_i  &=& y_n +   h\, \sum_{j=1}^{i-1}\, a_{i,j}\,   F(Y_j)  +  h\,\Lb \sum_{j=1}^{i-1}\,  \left( \widehat{a}_{i,j}-a_{i,j} \right)\,   \, Y_j, \\
y_{n+1} &=&  y_n +   h\, \sum_{j=1}^s\, b_{j}\,   F(Y_j)  +   h\, \Lb\, \sum_{j=1}^s\, \left(\widehat{b}_{j}-b_{j}\right)\,  Y_j. 
\end{eqnarray}
\end{subequations}
Using the notation
\begin{equation*}
\gamma_{i,j} = \left(\widehat{a}_{i,j}-a_{i,j} \right), \quad \textrm{and} \quad g_i = \left(\widehat{b}_{j}-b_{j}\right)
\end{equation*}
in equation \eqref{LIRKW:eqn:lirkwimex} leads to the standard form of a
Linearly-Implicit Runge-Kutta-W (LIRK-W) method:
\begin{subequations}
\label{LIRKW:eqn:lirkw}
\begin{eqnarray}
\label{LIRKW:eqn:lirkwstage}
\left( \I - h\,\gamma_{i,i}\,\Lb\right)\, Y_i  &=& y_n +   h\, \sum_{j=1}^{i-1}\, a_{i,j}\,   F(Y_j)  +  h\,\Lb \sum_{j=1}^{i-1}\, \gamma_{i,j}\,   \, Y_j, \\
\label{LIRKW:eqn:lirkwynew}
y_{n+1} &=&  y_n +   h\, \sum_{j=1}^s\, b_{j}\,   F(Y_j)  +   h\, \Lb\, \sum_{j=1}^s\, g_j\,  Y_j. 
\end{eqnarray}
\end{subequations}

It is clear from equation \eqref{LIRKW:eqn:lirkwode} that the linear operator $\Lb$ can be arbitrary, since collecting terms leads back to the original form of the initial value problem \eqref{LIRKW:eqn:ode}.  Additionally, \eqref{LIRKW:eqn:lirkw} makes exclusive use of the entire right-hand-side vector F(y), there is no splitting necessary. There are several benefits of this framework. First, the stiff terms that are integrated implicitly are linear, so no solutions of nonlinear systems are required. Next, LIRK-W methods will be designed to preserve accuracy for any matrix $\Lb$. The selected $\Lb$ is only used to ensure numerical stability;  its structure is arbitrary and can be chosen to ensure computational efficiency on the hardware at hand. With this in mind,  an ideal choice of the linear operator is one which approximates the stiff dynamics of  the system, $\Lb \approx \partial g(t,y)/\partial y$, in order to improve stability of the numerical integration, and where solutions of linear systems containing $\Lb$ can be computed efficiently.  

LIRK-W methods are similar to Rosenbrock-W schemes, in that they depend only on linear solves and permit arbitrary matrices.  However, as will be discussed more thoroughly in section \ref{LIRKW:sec:norosenbrock}, there are several advantages to the LIRK-W framework.

\subsection{Approximate Matrix Factorization}
\label{LIRKW:sec:amf}

Approximate Matrix Factorization (AMF) is often employed to speed up the solution of linear systems required when computing stage vectors \eqref{LIRKW:eqn:imexrkstage} or \eqref{LIRKW:eqn:lirkwstage} of implicit methods:
\begin{equation}
\label{LIRKW:eqn:rksolve}
\left(\I - h \gamma_{i,i} \Lb\right)\, k_i = d_i.
\end{equation}
AMF splits the matrix $\Lb$, usually the Jacobian of the right hand side vector in \eqref{LIRKW:eqn:ode}, into a sum of parts
\begin{equation}
\label{LIRKW:eqn:amfsum}
\Lb = \displaystyle\sum_{r = 1}^R \Lb^{\{r\}},
\end{equation}
and approximates the solution to \eqref{LIRKW:eqn:rksolve} by replacing the matrix as follows:
\begin{equation}
\label{LIRKW:eqn:amfprod}
\begin{split}
\left(\I - h\,\gamma_{i,i}\,\Lb\right) &\approx \left(\I - h\,\gamma_{i,i}\,\widetilde{\Lb}\right) = \displaystyle\prod_{r=1}^R \left(\I - h\,\gamma_{i,i}\,\Lb^{\{r\}}\right), \\
k_i &= \left(\I - h \gamma_{i,i} \Lb\right)^{-1}\, d_i \approx \displaystyle\prod_{r=1}^R \left(\I - h\,\gamma_{i,i}\,\Lb^{\{r\}}\right)^{-1}\, d_i.
\end{split}
\end{equation}
The solution of one large linear system with matrix $\Lb$ is replaced by solving in succession $R$ small linear systems with matrices $\Lb^{\{r\}}$, potentially leading to considerable computational savings.

The approximation \eqref{LIRKW:eqn:amfprod} implicitly defines the matrix $\widetilde{\Lb}$ as
\begin{equation}
\label{LIRKW:eqn:Ltilde}
 \widetilde{\Lb}  = \Lb + \sum_{k=2}^R \, \left(-h \gamma_{i,i}\right)^{k-1} \sum_{1\leq i_1 < i_2 < \dots < i_k \leq R} \Lb_{i_1} \, \Lb_{i_2} \dots \Lb_{i_k} .
\end{equation}
For example, if $\Lb$ is the discrete two-dimensional Laplacian, then in \eqref{LIRKW:eqn:amfsum} the parts $\Lb^{\{1\}}$ and $\Lb^{\{2\}}$ can correspond to derivatives along the $x_1$ and $x_2$ directions respectively.  In this case the AMF approximation corresponds to the alternating directions factorization \cite{Douglas_1962,Peaceman_1955}
\begin{equation*}
\I - h\gamma\widetilde{\Lb} := \left(\I - h\gamma\Lb^{\{1\}}\right)\,\left(\I - h\gamma\Lb^{\{2\}}\right),
\qquad \widetilde{\Lb} = \Lb - h\gamma\Lb^{\{1\}}\Lb^{\{2\}}.
\end{equation*}
In this way it is possible to solve individually much simpler one-dimensional systems corresponding to each individual $\Lb_i$.  An alternating directions factorization is not the only choice possible, but it does motivate many of our design decisions for LIRK-W methods.

\subsection{Alternative formulations of LIRK-W methods}
\label{LIRKW:sec:formulations}

We note from \eqref{LIRKW:eqn:Ltilde} that the approximate matrix $\widetilde{\Lb}(h)$ depends on the step size, i.e., depends on time.  The general form \eqref{LIRKW:eqn:lirkw} making use of a time dependent, stage varying matrix $\Lb$ can be interpreted in several ways.  In the most straightforward variation, we can let there be one $\Lb_i$ per stage, and make use of these matrices everywhere that $\Lb$ appears in \eqref{LIRKW:eqn:lirkw}, we will call this a type 1 LIRK-W method  

\begin{subequations}
\label{LIRKW:eqn:lirkwtype1}
\begin{eqnarray}
Y_i  &=& y_n +   h\, \sum_{j=1}^{i-1}\, a_{i,j}\,   F(Y_j)  +  h\,\sum_{j=1}^{i}\, \gamma_{i,j}\,\Lb_j\, Y_j,\\
y_{n+1} &=&  y_n +   h\, \sum_{i=1}^s\, b_{i}\,   F(Y_i)  +   h\, \sum_{i=1}^s\, g_i\, \Lb_i\, Y_i,
\end{eqnarray}
\end{subequations}
with $\Lb_i = \widetilde{\Lb}(t_n + h\gamma_{i,i})$.

Alternatively, we can attempt to interpret \eqref{LIRKW:eqn:lirkw} to account for an AMF form of $\Lb$, where there exists two forms of $\Lb$: equation \eqref{LIRKW:eqn:amfsum}, which has clear advantages for a multiplication by $\Lb$; and equation \eqref{LIRKW:eqn:amfprod}, which has clear advantages when used in the inversion of the left hand side of equation \eqref{LIRKW:eqn:lirkwstage}.  A LIRK-W method of type 2 exploits the natural features of an AMF form of $\Lb$ and has the general form
\begin{subequations}
\label{LIRKW:eqn:lirkwtype2}
\begin{eqnarray}
Y_i  &=& y_n +   h\, \sum_{j=1}^{i-1}\, a_{i,j}\,   F(Y_j)  +  h\,\Lb \sum_{j=1}^{i-1}\, \gamma_{i,j}\,\, Y_j + h\gamma_{i,i}\Lb_i Y_i, \\
y_{n+1} &=&  y_n +   h\, \sum_{j=1}^s\, b_{j}\,   F(Y_j)  +   h\, \Lb\, \sum_{j=1}^s\, g_i\,  Y_j,
\end{eqnarray}
\end{subequations}
where $\Lb_i = \Lb(t_n + h\gamma_{i,i})$.

More variations of \eqref{LIRKW:eqn:lirkw} are possible, and different choices for constructing $\Lb_i$ may motivate alternatives to type 1 or type 2 LIRK-W schemes.

\subsection{LIRK-W methods are not Rosenbrock-W methods}
\label{LIRKW:sec:norosenbrock}

As was discussed briefly above, LIRK-W methods share many common traits with a Rosenbrock, or Rosenbrock-W, approach.  Primary among them is that LIRK-W methods require only the solution of linear systems, and permit arbitrary matrices $\Lb$.  The defining characteristic of LIRK-W schemes, in contrast to Rosenbrock-W methods, is the allowance for the use of several different approximations in a single timestep.  Additionally, the arbitrary linear term, $\Lb_i$, may depend on time so that it is possible to directly approximate $\left(\I - h\gamma_{i,i}\Lb_i\right)^{-1}$ without violating assumptions inherent in the order condition framework.

One example of this, is the use of a GMRES like approximation to the linear systems $\left(\I - h\gamma_{i,i}\J\right)x = b_i$.  Early truncation of the GMRES procedure may lead to loss of order for Rosenbrock-W schemes since each linear system is solved using a different subspace, and is essentially the same as solving $s$ systems with $s$ different left-hand-sides.  It is possible to formulate a LIRK-W method for exactly this scenario, one possible approach is:
\begin{subequations}
\label{eqn:lirkwtype3}
\begin{eqnarray}
Y_i  &=& y_n +   h\, \sum_{j=1}^{i-1}\, a_{i,j}\,   F(Y_j)  +  h\,\Lb_i \sum_{j=1}^{i}\, \gamma_{i,j}\,\, Y_j  \\
y_{n+1} &=&  y_n +   h\, \sum_{j=1}^s\, b_{j}\,   F(Y_j)  +   h\, \Lb\, \sum_{j=1}^s\, g_i\,  Y_j,
\end{eqnarray}
\end{subequations}
which we will refer to as type 3 LIRK-W method.


\section{Order Conditions for Linearly-Implicit Runge-Kutta-W Methods}
\label{LIRKW:sec:oc}

We construct classical order conditions for the various types of LIRK-W methods by matching the Taylor series expansion of the exact solution of equation \eqref{LIRKW:eqn:ode} with the Taylor series expansion of the numerical solutions.  To do so we make use of Butcher trees \cite[section II.2]{Hairer_book_I}, with suitable modifications to handle the particular aspects of LIRK-W methods. These modifications lead to the new family of $LW-$trees, which we discuss next.

\subsection{LW-trees}
\label{LIRKW:sec:trees}

These trees contain three different colored vertices: meagre vertices are filled and represent an appearance of $F(y)$ and its derivatives, fat vertices are empty and represent an appearance of the linear term $\Lb$, and finally square vertices represent a differentiation in time of the fat node they terminate in.  We refer to the set of these trees as $LW_i$-trees, where $i$ denotes the type of LIRK-W method being discussed.

For LIRK-W methods of all types, we make use of the standard notation that the tree $\left[\tau_1, \dots, \tau_m\right]_{\bullet}$ is constructed by attaching the subtrees $\tau_1, \dots, \tau_m$ to a meagre node as its root.  Similarly, the $\left[ \tau \right]_\circ$ is constructed by attaching the subtree $\tau$ to a fat node, and new to this manuscript is the notation that $\left[ \tau \right]_{\theta_p}$ is constructed by attaching the subtree $\tau$ to a fat node prepended by $p$ square nodes.  Figure \ref{LIRKW:fig:treenotation} shows how trees can be constructed from sub-trees.

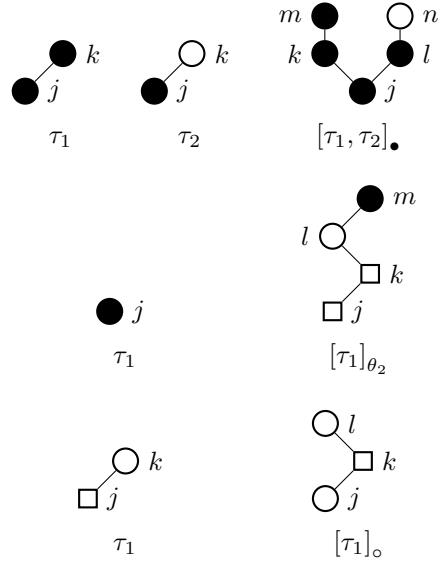
\begin{figure}[H]
\begin{center}
\begin{tabular}{ccc}
\begin{tikzpicture}[scale=.5]
      \meagrenode (j) at (0,0) [label=right:$j$] {};
      \meagrenode (k) at (1,1) [label=right:$k$] {};
      \draw[-] (j) -- (k);
  \end{tikzpicture} & 
\begin{tikzpicture}[scale=.5]
      \meagrenode (j) at (0,0) [label=right:$j$] {};
      \fatnode (k) at (1,1) [label=right:$k$] {};
      \draw[-] (j) -- (k);
  \end{tikzpicture} & 
\begin{tikzpicture}[scale=.5]
      \meagrenode (j) at (1,0) [label=right:$j$] {};
      \meagrenode (k) at (0,1) [label=left:$k$] {};
      \meagrenode (l) at (2,1) [label=right:$l$] {};
      \meagrenode (m) at (0,2) [label=left:$m$] {};
      \fatnode    (n) at (2,2) [label=right:$n$] {};
      \draw[-] (j) -- (k);
      \draw[-] (j) -- (l);
      \draw[-] (k) -- (m);
      \draw[-] (l) -- (n);
  \end{tikzpicture} \\
$\tau_1$ & $\tau_2$ & $\left[\tau_1, \tau_2\right]_\bullet$ \\
& &  \\
\multicolumn{2}{c}{\begin{tikzpicture}[scale=.5]
      \meagrenode (j) at (0,0) [label=right:$j$] {};
        \end{tikzpicture}} & \begin{tikzpicture}[scale=.5]
      \boxnode (j) at (0,0) [label=right:$j$] {};
      \boxnode (k) at (1,1) [label=right:$k$] {};
      \fatnode (l) at (0,2) [label=left:$l$] {};
      \meagrenode(m) at (1,3) [label=right:$m$] {};
      \draw[-] (j) -- (k);
      \draw[-] (k) -- (l);
      \draw[-] (l) -- (m);
  \end{tikzpicture} \\
\multicolumn{2}{c}{$\tau_1$} & $\left[\tau_1\right]_{\theta_2}$\\
& & \\
\multicolumn{2}{c}{\begin{tikzpicture}[scale=.5]
      \boxnode (j) at (0,0) [label=right:$j$] {};
      \fatnode (k) at (1,1) [label=right:$k$] {};
      \draw[-] (j) -- (k);
        \end{tikzpicture}} & \begin{tikzpicture}[scale=.5]
      \fatnode (j) at (0,0) [label=right:$j$] {};
      \boxnode (k) at (1,1) [label=right:$k$] {};
      \fatnode (l) at (0,2) [label=right:$l$] {};
      \draw[-] (j) -- (k);
      \draw[-] (k) -- (l);
  \end{tikzpicture} \\
\multicolumn{2}{c}{$\tau_1$} & $\left[\tau_1\right]_{\circ}$
\end{tabular}
\caption{Illustration of the recursive definition of $LW$-trees.}
\label{LIRKW:fig:treenotation}
\end{center}
\end{figure}

To derive the expansion of the numerical solution for type 1 and 2 methods we make use of Fa{\`a} di Bruno's formula \cite[section II.2]{Hairer_book_I} to compute general high-order derivatives of the initial value problem right-hand-side vector $F(y)$
\begin{equation}
\label{LIRKW:eqn:faadibruno}
\left.\left(F(Y_j)\right)^{(q-1)}\right|_{h=0} = \displaystyle\sum_{m>=1} \frac{\partial^m f}{\partial y^m}\left(Y_j^{(\mu_1)}, \dots, Y_j^{(\mu_m)}\right), \quad \mu_1 + \dots + \mu_m = q-1.
\end{equation}
%

\subsection{Order conditions for LIRK-W methods of type 1}
\label{LIRKW:sec:order-type-1}

Here we derive order conditions for LIRK-W methods of type 1 \eqref{LIRKW:eqn:lirkwtype1}.

We can make use of the expansion \eqref{LIRKW:eqn:faadibruno} to construct derivatives of the type 1 LIRK-W scheme to arrive at the result that
\begin{subequations}
\label{LIRKW:eqn:lirkwexpansiontype1}
\begin{equation}
\left.\left(Y_i\right)^{(q)}\right|_{h=0} = q \displaystyle\sum_{j=1}^{i-1}a_{i,j} \displaystyle\sum_{m\geq 1}\frac{\partial^m F}{\partial y^m}\left(Y_j^{(\mu_1)}, \dots, Y_j^{(\mu_m)}\right) + q\displaystyle\sum_{j=1}^i \gamma_{i,j} \displaystyle\sum_{k=0}^{q-1}{q-1 \choose k}\gamma_{j,j}^k\left(\frac{d^k}{dt^k}\Lb\right)Y_j^{(q-1-k)}
\end{equation}
\begin{equation}
\left.\left(y_{n+1}\right)^{(q)}\right|_{h=0} = q \displaystyle\sum_{i=1}^{s}b_{i} \displaystyle\sum_{m\geq 1}\frac{\partial^m F}{\partial y^m}\left(Y_i^{(\mu_1)}, \dots, Y_i^{(\mu_m)}\right) + q\displaystyle\sum_{i=1}^s g_{i} \displaystyle\sum_{k=0}^{q-1}{q-1 \choose k}\gamma_{i,i}^k\left(\frac{d^k}{dt^k}\Lb\right)Y_i^{(q-1-k)}
\end{equation}
\end{subequations}
Leading to the definition for the $LW_1$ trees
\[ 
LW_1 = \left\{ \begin{array}{cl}N_3\textrm{-trees:} & \textrm{fat vertices are singly branched, and} \\
				 & \textrm{square vertices are singly branched} \\ 
				 & \quad \quad\textrm{with square or fat children}  \end{array} \right\} 
\]

Figures \ref{LIRKW:fig:lirkwtrees1} and \ref{LIRKW:fig:lirkwtrees2}, give the $LW_1$-trees, $\tau_i$, representing the elementary differentials, $f(\tau_i)$, present in the Taylor expansion of the exact and numerical solutions, as well as the coefficients of these terms, $\Phi^1(\tau_i)$ and $P(\tau_i)$ respectively.

\begin{figure}[htp]
\begin{center}
\def\arraystretch{1.5}
\footnotesize
\begin{tabular}{|c|c|c|c|c|c|}
\hline
$i$ & 1 & 2 & 3 & 4 & 5\\
\hline
$\tau_i$ & 
\begin{tikzpicture}[scale=.5]
      \meagrenode (j) at (0,0) [label=right:$j$] {};
  \end{tikzpicture} & 
\begin{tikzpicture}[scale=.5]
      \fatnode (j) at (0,0) [label=right:$j$] {};
  \end{tikzpicture} & 
\begin{tikzpicture}[scale=.5]
      \meagrenode (j) at (0,0) [label=right:$j$] {};
      \meagrenode (k) at (1,1) [label=right:$k$] {};
      \draw[-] (j) -- (k);
  \end{tikzpicture} &
\begin{tikzpicture}[scale=.5]
      \meagrenode (j) at (0,0) [label=right:$j$] {};
      \fatnode (k) at (1,1) [label=right:$k$] {};
      \draw[-] (j) -- (k);
  \end{tikzpicture} &
\begin{tikzpicture}[scale=.5]
      \fatnode (j) at (0,0) [label=right:$j$] {};
      \meagrenode (k) at (1,1) [label=right:$k$] {};
      \draw[-] (j) -- (k);
  \end{tikzpicture} \\
\hline
$f(\tau_i)$ & $f^J$ & $\Lb_{JK}y^K$  & $f^J_Kf^K$ & $f^J_K\Lb_{KL}y^L$ & $\Lb_{JK}f^K$ \\
\hline
$\Phi^1(\tau_i)$ & $ b_j$ & $ g_j$ &  $b_j a_{j,k}$ & $b_j \gamma_{j,k}$ & $g_j a_{j,k}$\\
\hline
$\Phi^2(\tau_i)$ & $ b_j$ & $ g_j$ &  $b_j a_{j,k}$ & $b_j \gamma_{j,k}$ & $g_j a_{j,k}$\\
\hline
$\Phi^3(\tau_i)$ & $ b_j$ & $ g_j$ &  $b_j a_{j,k}$ & $b_j \gamma_{j,k}$ & $g_j a_{j,k}$\\
\hline
$P(\tau_i)$ & 1 & 0 & 1/2 & 0 & 0 \\
\hline
\hline
$i$ & 6 & 7 & 8 & 9 & 10\\
\hline
$\tau_i$ & 
\begin{tikzpicture}[scale=.5]
      \fatnode (j) at (0,0) [label=right:$j$] {};
      \fatnode (k) at (1,1) [label=right:$k$] {};
      \draw[-] (j) -- (k);
  \end{tikzpicture} &
\begin{tikzpicture}[scale=.5]
      \boxnode (j) at (0,0) [label=right:$j$] {};
      \fatnode (k) at (1,1) [label=right:$k$] {};
      \draw[-] (j) -- (k);
  \end{tikzpicture} &
\begin{tikzpicture}[scale=.5]
      \meagrenode (j) at (1,0) [label=right:$j$] {};
      \meagrenode (k) at (0,1) [label=left:$k$] {};
      \meagrenode (l) at (2,1) [label=right:$l$] {};
      \draw[-] (j) -- (k);
      \draw[-] (j) -- (l);
  \end{tikzpicture} &
\begin{tikzpicture}[scale=.5]
      \meagrenode (j) at (1,0) [label=right:$j$] {};
      \fatnode (k) at (0,1) [label=left:$k$] {};
      \meagrenode (l) at (2,1) [label=right:$l$] {};
      \draw[-] (j) -- (k);
      \draw[-] (j) -- (l);
  \end{tikzpicture}  &
\begin{tikzpicture}[scale=.5]
      \meagrenode (j) at (1,0) [label=right:$j$] {};
      \fatnode (k) at (0,1) [label=left:$k$] {};
      \fatnode (l) at (2,1) [label=right:$l$] {};
      \draw[-] (j) -- (k);
      \draw[-] (j) -- (l);
  \end{tikzpicture}  \\
\hline
$f(\tau_i)$ & $\Lb_{JK}\Lb_{KL}y^L$ & $\Lb'_{JK}y^K$ & $f^J_{KL}f^Kf^L$ & $f^J_{KL}\Lb_{KM}y^Mf^L$ &   $f^J_{KL}\Lb_{KM}y^M\Lb_{LN}y^N$ \\
\hline
$\Phi^1(\tau_i)$ & $g_j \gamma_{j,k}$ & $g_k\gamma_{k,k}$ & $b_ja_{j,k}a_{j,l}$ & $b_j \gamma_{j,k} a_{j,l}$ & $b_j \gamma_{j,k} \gamma_{j,l}$ \\
\hline
$\Phi^2(\tau_i)$ & $g_j \gamma_{j,k}$ & N/A & $b_ja_{j,k}a_{j,l}$ & $b_j \gamma_{j,k} a_{j,l}$ & $b_j \gamma_{j,k} \gamma_{j,l}$ \\
\hline
$\Phi^3(\tau_i)$ & $g_j \gamma_{j,k}$ & N/A & $b_ja_{j,k}a_{j,l}$ & $b_j \gamma_{j,k} a_{j,l}$ & $b_j \gamma_{j,k} \gamma_{j,l}$ \\
\hline
$P(\tau_i)$ & 0 & 0 & 1/3 & 0 & 0 \\
\hline
\hline
$i$ & 11 & 12 & 13 & 14 & 15\\
\hline
$\tau_i$ & 
\begin{tikzpicture}[scale=.5]
      \meagrenode (j) at (0,0) [label=right:$j$] {};
      \meagrenode (k) at (1,1) [label=right:$k$] {};
      \meagrenode (l) at (0,2) [label=left:$l$] {};
      \draw[-] (j) -- (k);
      \draw[-] (k) -- (l);
  \end{tikzpicture} &
\begin{tikzpicture}[scale=.5]
      \meagrenode (j) at (0,0) [label=right:$j$] {};
      \meagrenode (k) at (1,1) [label=right:$k$] {};
      \fatnode (l) at (0,2) [label=left:$l$] {};
      \draw[-] (j) -- (k);
      \draw[-] (k) -- (l);
  \end{tikzpicture} &
\begin{tikzpicture}[scale=.5]
      \meagrenode (j) at (0,0) [label=right:$j$] {};
      \fatnode (k) at (1,1) [label=right:$k$] {};
      \fatnode (l) at (0,2) [label=left:$l$] {};
      \draw[-] (j) -- (k);
      \draw[-] (k) -- (l);
  \end{tikzpicture} &
\begin{tikzpicture}[scale=.5]
      \meagrenode (j) at (0,0) [label=right:$j$] {};
      \fatnode (k) at (1,1) [label=right:$k$] {};
      \meagrenode (l) at (0,2) [label=left:$l$] {};
      \draw[-] (j) -- (k);
      \draw[-] (k) -- (l);
  \end{tikzpicture} &
\begin{tikzpicture}[scale=.5]
      \fatnode (j) at (0,0) [label=right:$j$] {};
      \fatnode (k) at (1,1) [label=right:$k$] {};
      \meagrenode (l) at (0,2) [label=left:$l$] {};
      \draw[-] (j) -- (k);
      \draw[-] (k) -- (l);
  \end{tikzpicture} \\
\hline
$f(\tau_i)$ & $f^J_Kf^K_Lf^L$ & $f^J_Kf^K_L\Lb_{LM}y^M$ & $f^J_K\Lb_{KL}\Lb_{LM}y^M$ & $f^J_K \Lb_{KL}f^L$ & $ \Lb_{JK}\Lb_{KL}f^L$ \\
\hline
$\Phi^1(\tau_i)$ & $b_j a_{j,k} a_{k,l}$ & $b_j a_{j,k} \gamma_{k,l}$ & $b_j \gamma_{j,k} \gamma_{k,l}$ & $b_j \gamma_{j,k} a_{k,l} $ & $g_j \gamma_{j,k} a_{k,l}$ \\
\hline
$\Phi^2(\tau_i)$ & $b_j a_{j,k} a_{k,l}$ & $b_j a_{j,k} \gamma_{k,l}$ & $b_j \gamma_{j,k} \gamma_{k,l}$ & $b_j \gamma_{j,k} a_{k,l} $ & $g_j \gamma_{j,k} a_{k,l}$ \\
\hline
$\Phi^3(\tau_i)$ & $b_j a_{j,k} a_{k,l}$ & $b_j a_{j,k} \gamma_{k,l}$ & $b_j \gamma_{j,k} \gamma_{k,l}$ & $b_j \gamma_{j,k} a_{k,l} $ & $g_j \gamma_{j,k} a_{k,l}$ \\
\hline
$P(\tau_i)$ & 1/6 & 0 & 0 & 0 & 0 \\
\hline
\end{tabular}
\caption{Trees and order conditions for LIRK-W methods up to order three.}
\label{LIRKW:fig:lirkwtrees1}
\end{center}
\end{figure}

\begin{figure}[htp]
\begin{center}
\def\arraystretch{1.5}
\footnotesize
\begin{tabular}{|c|c|c|c|c|}
\hline
$i$ & 16 & 17 & 18 & 19  \\
\hline
$\tau_i$ & 
\begin{tikzpicture}[scale=.5]
      \fatnode (j) at (0,0) [label=right:$j$] {};
      \meagrenode (k) at (1,1) [label=right:$k$] {};
      \meagrenode (l) at (0,2) [label=left:$l$] {};
      \draw[-] (j) -- (k);
      \draw[-] (k) -- (l);
  \end{tikzpicture} &
\begin{tikzpicture}[scale=.5]
      \fatnode (j) at (0,0) [label=right:$j$] {};
      \meagrenode (k) at (1,1) [label=right:$k$] {};
      \fatnode (l) at (0,2) [label=left:$l$] {};
      \draw[-] (j) -- (k);
      \draw[-] (k) -- (l);
  \end{tikzpicture} &
\begin{tikzpicture}[scale=.5]
      \fatnode (j) at (0,0) [label=right:$j$] {};
      \fatnode (k) at (1,1) [label=right:$k$] {};
      \fatnode (l) at (0,2) [label=left:$l$] {};
      \draw[-] (j) -- (k);
      \draw[-] (k) -- (l);
  \end{tikzpicture} &
\begin{tikzpicture}[scale=.5]
      \boxnode (j) at (0,0) [label=right:$j$] {};
      \boxnode (k) at (1,1) [label=right:$k$] {};
      \fatnode (l) at (0,2) [label=left:$l$] {};
      \draw[-] (j) -- (k);
      \draw[-] (k) -- (l);
  \end{tikzpicture} \\ 
\hline
$f(\tau_i)$ & $\Lb_{JK}f^K_Lf^L$ & $\Lb_{JK}f^K_L\Lb_{LM}y^M$ & $ \Lb_{JK}\Lb_{KL}\Lb_{LM}y^M$ & $\Lb''_{JM}y^M$  \\
\hline
$\Phi^1(\tau_i)$ & $g_j a_{j,k} a_{k,l}$ & $g_j a_{j,k} \gamma_{k,l}$ & $g_j \gamma_{j,k} \gamma_{k,l}$ & $g_l \gamma_{l,l} \gamma_{l,l}$ \\
\hline
$\Phi^2(\tau_i)$ & $g_j a_{j,k} a_{k,l}$ & $g_j a_{j,k} \gamma_{k,l}$ & $g_j \gamma_{j,k} \gamma_{k,l}$ & N/A \\
\hline
$\Phi^3(\tau_i)$ & $g_j a_{j,k} a_{k,l}$ & $g_j a_{j,k} \gamma_{k,l}$ & $g_j \gamma_{j,k} \gamma_{k,l}$ & N/A \\
\hline
$P(\tau_i)$ & 0 & 0 & 0 & 0 \\
\hline
\hline
$i$ & 20 & 21 & 22 & 23   \\
\hline
$\tau_i$ & 
\begin{tikzpicture}[scale=.5]
      \fatnode (j) at (0,0) [label=right:$j$] {};
      \boxnode (k) at (1,1) [label=right:$k$] {};
      \fatnode (l) at (0,2) [label=left:$l$] {};
      \draw[-] (j) -- (k);
      \draw[-] (k) -- (l);
  \end{tikzpicture} &
\begin{tikzpicture}[scale=.5]
      \boxnode (j) at (0,0) [label=right:$j$] {};
      \fatnode (k) at (1,1) [label=right:$k$] {};
      \fatnode (l) at (0,2) [label=left:$l$] {};
      \draw[-] (j) -- (k);
      \draw[-] (k) -- (l);
  \end{tikzpicture} &
\begin{tikzpicture}[scale=.5]
      \boxnode (j) at (0,0) [label=right:$j$] {};
      \fatnode (k) at (1,1) [label=right:$k$] {};
      \meagrenode (l) at (0,2) [label=left:$l$] {};
      \draw[-] (j) -- (k);
      \draw[-] (k) -- (l);
  \end{tikzpicture} &
\begin{tikzpicture}[scale=.5]
      \meagrenode (j) at (0,0) [label=right:$j$] {};
      \boxnode (k) at (1,1) [label=right:$k$] {};
      \fatnode (l) at (0,2) [label=left:$l$] {};
      \draw[-] (j) -- (k);
      \draw[-] (k) -- (l);
  \end{tikzpicture}  \\
\hline
$f(\tau_i)$ & $\Lb_{JK}\Lb'_{KM}y^M$ & $\Lb'_{JL}\Lb_{LM}y^M$ & $ \Lb'_{JL}f^L $ & $f^J_K\Lb'_{KM}y^M $ \\
\hline
$\Phi^1(\tau_i)$ & $g_j \gamma_{j,l} \gamma_{l,l}$ & $g_k \gamma_{k,k} \gamma_{k,l}$ & $g_k\gamma_{k,k}a_{k,l}$ & $b_j \gamma_{j,l}\gamma_{l,l}$  \\
\hline
$\Phi^2(\tau_i)$ & $g_j \gamma_{j,j}\gamma_{j,j}$ & N/A & N/A & $b_j \gamma_{j,j}\gamma_{j,j}$  \\
\hline
$\Phi^3(\tau_i)$ & $g_j \gamma_{j,j}\gamma_{j,l}$ & N/A & N/A & $b_j \gamma_{j,j}\gamma_{j,l}$  \\
\hline
$P(\tau_i)$ & 0 & 0 & 0 & 0  \\
\hline
\end{tabular}
\caption{Trees and order conditions for LIRK-W methods up to order three (Continued).}
\label{LIRKW:fig:lirkwtrees2}
\end{center}
\end{figure}

The recursive formulas in equation \eqref{LIRKW:eqn:recursiveordertype1} map the visual representation of a given $LW_1$-tree to the left hand side of the order condition, $\Phi_j(\tau)$, corresponding to that tree.
\begin{subequations}
\label{LIRKW:eqn:recursiveordertype1}
\begin{eqnarray}
\label{LIRKW:eqn:reorderphitype1}
\Phi^1_j(\tau) & = & \left\{ 
\begin{array}{lcrcl} 
b_j\bar{\Phi}^1_{j}(\tau_1)\cdots\bar{\Phi}^1_{j}(\tau_m) & \textrm{if} & \tau & = & \left[\tau_1, \dots, \tau_m\right]_{\bullet}, \\
g_j \bar{\Phi}^1_{j}(\tau_1) & \textrm{if} & \tau & = & \left[\tau_1\right]_\circ, \\
g_j \gamma_{j,j}^p \bar{\Phi}^1_{j}(\tau_1) & \textrm{if} & \tau & = & \left[\tau_1\right]_{\theta_p}, \\
b_j & \textrm{if} & \tau & = & \left[ \; \right]_\bullet, \\
g_j & \textrm{if} & \tau & = & \left[ \; \right]_\circ,
\end{array} \right. \\
\label{LIRKW:eqn:reorderphibartype1}
\bar{\Phi}^1_j(\tau) & = & \left\{
\begin{array}{lcrcl}
a_{j,k} \bar{\Phi}^1_{k}(\tau_1) \cdots \bar{\Phi}^1_{k}(\tau_m) & \textrm{if} & \tau & = & \left[\tau_1, \dots, \tau_m\right]_\bullet, \\
\gamma_{j,k} \bar{\Phi}^1_{k}(\tau_1) & \textrm{if} & \tau & = & \left[ \tau_1 \right]_\circ, \\
\gamma_{j,k}\gamma_{k,k}^p \bar{\Phi}^1_j(\tau_1) & \textrm{if} & \tau & = & \left[ \tau_1 \right]_{\theta_p}, \\
a_{j,k} & \textrm{if} & \tau & = & \left[ \; \right]_\bullet, \\
\gamma_{j,k} & \textrm{if} & \tau & = & \left[ \; \right]_\circ, \\
\end{array} \right. \\
\end{eqnarray}
\end{subequations}
Similarly, the recursive formula in equation \eqref{LIRKW:eqn:recursivedifferential} maps the visual representation of an $LW_1$-tree to its corresponding elementary differential in the Taylor series expansion of the numerical solution computed using a LIRK-W scheme \eqref{LIRKW:eqn:lirkwtype1}.
\begin{equation}
\label{LIRKW:eqn:recursivedifferential}
F^J(\tau)(y) = \left\{ 
\begin{array}{clcrcl}
\displaystyle\sum_{K_1, \dots, K_m} & f^J_{K_1, \dots, K_M}(y)\cdot\left(F^{K_1}(\tau_1) \cdots F^{K_m}(\tau_m)\right)(y) & \textrm{if} & \tau & = & \left[ \tau_1, \dots, \tau_m\right]_\bullet \\
\displaystyle\sum_{K} & \Lb_{JK}F^K(\tau_1)(y) & \textrm{if} & \tau & = & \left[\tau_1\right]_\circ \\
\displaystyle\sum_{K} & \left(\frac{d^p}{dt^p}\Lb_{JK}\right) F^K(\tau_1)(y) & \textrm{if} & \tau & = & \left[\tau_1\right]_{\theta_p} \\
\displaystyle\sum_{K} & \Lb_{JK}y^K & \textrm{if} &  \tau & = & []_{\circ} \\
\displaystyle\sum_{K} & \left(\frac{d^p}{dt^p}\Lb_{JK}\right)y^K & \textrm{if} & \tau & = & []_{\theta_p}
\end{array} \right.
\end{equation}

\begin{remark}
We note that the Taylor series expansion of the exact solution will be comprised of only terms containing the function $F(t,y)$ and its derivatives, and so these terms will be represented by trees containing only meagre nodes.  Alternatively, the Taylor series expansion of the numerical solution will be comprised of terms containing the function $F(t,y)$ and its derivatives, as well as terms containing $\Lb$ and its derivatives.  For this reason, the order conditions corresponding to trees containing fat and square nodes will be set to zero, to eliminate any contribution of these terms to the error of the LIRK-W method.
\end{remark}
\begin{theorem}[Order conditions for LIRK-W methods of type 1]\label{LIRKW:thm:LIRKWtype1-conditions}
A LIRK-W method of type 1 has order $p$ iff the following order conditions hold:
\label{LIRKW:eqn:LIRKWtype1-conditions}
\begin{eqnarray}
\label{LIRKW:eqn:LIRKtype1-condition-T}
\sum_j \Phi^1_j(\tau) = \frac{1}{\gamma(\tau)} \quad \forall\, \tau \in T ~~ \mbox{with } \rho(\tau) \le p\,, \\
\label{LIRKW:eqn:LIRKtype1-condition-TW}
\sum_j \Phi^1_j(\tau) = 0 \quad \forall\, \tau \in LW_1\backslash T ~~ \mbox{with } \rho(\tau) \le p\,.
\end{eqnarray}
Here $\rho(t)$ is the number of vertices of the tree $t$, and $\gamma(t)$ is the ``product of $\rho(t)$ and all orders of the trees which appear, if the roots, one after another, are removed from $t$'' \cite[Section II.2]{Hairer_book_I}.
\end{theorem}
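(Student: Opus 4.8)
The plan is to run the classical B-series / Butcher-tree argument, extended to keep track of the arbitrary, time-dependent linear operator $\Lb$ and the square nodes that encode its time derivatives. The starting point is the exact solution: by the standard theory \cite[Section II.2]{Hairer_book_I}, the Taylor expansion of the solution of \eqref{LIRKW:eqn:ode} about $h=0$ is the B-series
\[
y(t_n+h) = y_n + \sum_{\tau \in T} \frac{h^{\rho(\tau)}}{\sigma(\tau)\,\gamma(\tau)}\, F(\tau)(y_n),
\]
where $F(\tau)(y)$ is the elementary differential assembled solely from $F$ and its derivatives and $\sigma(\tau)$ is the usual symmetry factor. The essential observation is that this series is supported only on the meagre-node trees $T$, since $y(t_n+h)$ depends on $F$ alone.

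Next I would establish the B-series of the numerical solution. Taking the stage-derivative formula \eqref{LIRKW:eqn:lirkwexpansiontype1} together with Fa{\`a} di Bruno's formula \eqref{LIRKW:eqn:faadibruno}, I would prove by induction on $q$ that
\[
\frac{1}{q!}\left.\left(Y_i\right)^{(q)}\right|_{h=0} = \sum_{\substack{\tau \in LW_1 \\ \rho(\tau)=q}} \frac{1}{\sigma(\tau)}\,\bar{\Phi}^1_i(\tau)\,F(\tau)(y_n),
\]
with $\bar{\Phi}^1_i$ and $F(\tau)$ the quantities defined recursively in \eqref{LIRKW:eqn:recursiveordertype1} and \eqref{LIRKW:eqn:recursivedifferential}. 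In the inductive step, the first sum in \eqref{LIRKW:eqn:lirkwexpansiontype1}, expanded through Fa{\`a} di Bruno, produces the trees with a meagre root (rule $[\tau_1,\dots,\tau_m]_\bullet$); the second sum contributes, for $k=0$, a tree with a fat root (rule $[\tau_1]_\circ$) and, for $k=p\ge 1$, a tree whose root is a fat node preceded by $p$ square nodes (rule $[\tau_1]_{\theta_p}$), the binomial coefficient $\binom{q-1}{k}$ being exactly the combinatorial multiplicity absorbed by $\sigma(\tau)$ and the factor $\gamma_{j,j}^k$ reproducing the $\gamma^p$ factors of \eqref{LIRKW:eqn:recursiveordertype1}. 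The same bookkeeping shows that the trees which can arise are precisely those of $LW_1$: a fat vertex is singly branched because $\Lb_j$ multiplies the single vector $Y_j$, and a square vertex is singly branched with a square or fat child because time differentiation acts on $\Lb$ and then on the chain already sitting above it. Inserting this into \eqref{LIRKW:eqn:lirkwynew} gives
\[
y_{n+1} = y_n + \sum_{\tau \in LW_1} \frac{h^{\rho(\tau)}}{\sigma(\tau)}\Bigl(\sum_j \Phi^1_j(\tau)\Bigr) F(\tau)(y_n).
\]

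Comparing the two B-series term by term up to $\rho(\tau)=p$ gives the \emph{if} direction immediately: if $\sum_j \Phi^1_j(\tau)=1/\gamma(\tau)$ for every $\tau\in T$ with $\rho(\tau)\le p$ and $\sum_j\Phi^1_j(\tau)=0$ for every $\tau\in LW_1\setminus T$ with $\rho(\tau)\le p$, then the truncated expansions of $y(t_n+h)$ and $y_{n+1}$ coincide, the local error is $O(h^{p+1})$, and the method has order $p$.

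The converse — and the main obstacle — is the independence of the elementary differentials. One needs: for any finite family of distinct trees in $LW_1$, there exist a dimension $N$, a smooth field $F\colon\R^N\to\R^N$, a smooth matrix function $\Lb(t)\in\R^{N\times N}$, and a point $y_n$ for which the differentials $\{F(\tau)(y_n)\}$ are linearly independent; only then does matching the aggregate coefficient of each differential force the individual order conditions. I would prove this by adapting Butcher's classical construction. The $T$-conditions already follow by specializing to $\Lb\equiv 0$, where \eqref{LIRKW:eqn:lirkwtype1} reduces to the explicit Runge--Kutta method with tableau $(a,b)$ and the classical independence lemma applies. For the $LW_1\setminus T$ conditions one must additionally realize $\Lb(t_n),\Lb'(t_n),\Lb''(t_n),\dots$ as algebraically independent data — possible because $\Lb$ is an arbitrary smooth function of $t$, e.g. a matrix polynomial in $t-t_n$ with prescribed coefficient matrices — and then check that the recursive differential map \eqref{LIRKW:eqn:recursivedifferential} is injective on $LW_1$ modulo the symmetry recorded by $\sigma(\tau)$; this injectivity holds because each node type leaves a recognizable signature in the differential (a derivative of $F$, a copy of $\Lb$, or a copy of $\frac{d^k}{dt^k}\Lb$), so distinct trees are separated by a suitable monomial choice of $F$ and $\Lb$.
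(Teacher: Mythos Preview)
Your proposal is correct and follows the same approach the paper takes: the paper's proof is a one-line deferral to the expansion \eqref{LIRKW:eqn:lirkwexpansiontype1} and to the Rosenbrock-W order-condition machinery in \cite[Section IV.7]{Hairer_book_II}, and what you have written is precisely a fleshed-out version of that deferral. Your treatment of the converse direction---sketching how to adapt Butcher's independence construction to accommodate the new square nodes by prescribing $\Lb(t_n),\Lb'(t_n),\dots$ independently---actually goes beyond what the paper spells out, but it is exactly the content implicit in citing the Rosenbrock-W theory.
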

\begin{proof}
 The proof follows from our discussion, equation \eqref{LIRKW:eqn:lirkwexpansiontype1} and the order conditions of Rosenbrock-W methods \cite[Section IV.7]{Hairer_book_II}.
\end{proof}
%

\subsection{Order conditions for LIRK-W methods of type 2}
\label{LIRKW:sec:order-type-2}

Here we derive order conditions for  LIRK-W methods of type 2 \eqref{LIRKW:eqn:lirkwtype2}.

We once again make use of the expansion \eqref{LIRKW:eqn:faadibruno} to construct derivatives of the type 2 LIRK-W scheme to arrive at the result that
\begin{subequations}
\label{LIRKW:eqn:lirkwexpansiontype2}
\begin{multline}
\left.\left(Y_i\right)^{(q)}\right|_{h=0} = q \displaystyle\sum_{j=1}^{i-1}a_{i,j}\displaystyle\sum_{m>=1} \frac{\partial^m f}{\partial y^m}\left(Y_j^{(\mu_1)}, \dots, Y_j^{(\mu_m)}\right) + q\Lb\displaystyle\sum_{j=1}^s \left(Y_j\right)^{(q-1)} \\
 + q \displaystyle\sum_{k=0}^{q-1} {q-1 \choose k}\gamma_{i,i}^{k+1}\left(\frac{d^k}{dt^k}\Lb\right)\left(Y_i\right)^{(q-1-k)}
\end{multline}
\begin{equation}
\left.\left(y_{n+1}\right)^{(q)}\right|_{h=0} = q \displaystyle\sum_{j=1}^s b_j \displaystyle\sum_{m>=1} \frac{\partial^m f}{\partial y^m}\left(Y_j^{(\mu_1)}, \dots, Y_j^{(\mu_m)}\right) + q\Lb \displaystyle\sum_{j=1}^s g_j \left(Y_j\right)^{(q-1)}
\end{equation}
\end{subequations}
Leading to the definition of $LW_2$-trees
\[ 
LW_2 = \left\{ \begin{array}{cl}N_3\textrm{-trees:} & \textrm{fat vertices are singly branched, and} \\
				 & \textrm{no tree has a square root, and} \\
				 & \textrm{square vertices are singly branched} \\ 
				 & \quad \quad\textrm{with square or fat children.} \end{array} \right\} 
\]

Figures \ref{LIRKW:fig:lirkwtrees1} and \ref{LIRKW:fig:lirkwtrees2}, give the $LW_2$-trees, $\tau_i$, representing the elementary differentials, $f(\tau_i)$, present in the Taylor expansion of the exact and numerical solutions, as well as the coefficients of these terms, $\Phi^2(\tau_i)$ and $P(\tau_i)$ respectively.

\begin{remark}
$LW_2$-trees are a subset of $LW_1$-trees, and so figures \ref{LIRKW:fig:lirkwtrees1} and \ref{LIRKW:fig:lirkwtrees2} contains trees which are not present in an expansion of the numerical solution of a LIRK-W method of type 2.  These trees can be identified by the presence of a square root, and the corresponding entry for $\Phi^2(\tau_i)$ being listed as ``N/A''.  In addition, $\Phi^1(\tau_i)$ and $\Phi^2(\tau_i)$ are identical, except in the case where $\tau_i$ contains a square node.
\end{remark}

The recursive formulas in equation \eqref{LIRKW:eqn:recursiveordertype2} map the visual representation of a given $LW$-tree to the left hand side of the order condition, $\Phi_j(\tau)$, corresponding to that tree.
\begin{subequations}
\label{LIRKW:eqn:recursiveordertype2}
\begin{eqnarray}
\label{LIRKW:eqn:reorderphitype2}
\Phi^2_j(\tau) & = & \left\{ 
\begin{array}{lcrcl} 
b_j\bar{\Phi}^2_{j}(\tau_1)\cdots\bar{\Phi}^2_{j}(\tau_m) & \textrm{if} & \tau & = & \left[\tau_1, \dots, \tau_m\right]_{\bullet}, \\
g_j \bar{\Phi}^2_{j}(\tau_1) & \textrm{if} & \tau & = & \left[\tau_1\right]_\circ, \\
b_j & \textrm{if} & \tau & = & \left[ \; \right]_\bullet, \\
g_j & \textrm{if} & \tau & = & \left[ \; \right]_\circ,
\end{array} \right. \\
\label{LIRKW:eqn:reorderphibartype2}
\bar{\Phi}^2_j(\tau) & = & \left\{
\begin{array}{lcrcl}
a_{j,k} \bar{\Phi}^2_{k}(\tau_1) \cdots \bar{\Phi}^2_{k}(\tau_m) & \textrm{if} & \tau & = & \left[\tau_1, \dots, \tau_m\right]_\bullet, \\
\gamma_{j,k} \bar{\Phi}^2_{k}(\tau_1) & \textrm{if} & \tau & = & \left[ \tau_1 \right]_\circ, \\
\gamma_{jj}^{p+1} \bar{\Phi}^2_j(\tau_1) & \textrm{if} & \tau & = & \left[ \tau_1 \right]_{\theta_p}, \\
a_{j,k} & \textrm{if} & \tau & = & \left[ \; \right]_\bullet, \\
\gamma_{j,k} & \textrm{if} & \tau & = & \left[ \; \right]_\circ, \\
\end{array} \right.
\end{eqnarray}
\end{subequations}
$LW_1$ and $LW_2$ map from trees to elementary differentials in the same way, so that once again equation \eqref{LIRKW:eqn:recursivedifferential} maps the visual representation of an $LW_2$-tree to its corresponding elementary differential in the Taylor series expansion of the numerical solution using the LIRK-W scheme of type 2 in equation \eqref{LIRKW:eqn:lirkwtype2}.

Making use of the same logic as for LIRK-W methods of type-1 we have that for LIRK-W methods of type 2
\begin{theorem}[Order conditions for LIRK-W methods of type 2]\label{LIRKW:thm:LIRKWtype2-conditions}
A LIRK-W method of type 1 has order $p$ iff the following order conditions hold:
\begin{subequations}
\label{LIRKW:eqn:LIRKWtype2-conditions}
\begin{eqnarray}
\label{LIRKW:eqn:LIRKtype2-condition-T}
\sum_j \Phi^2_j(\tau) = \frac{1}{\gamma(\tau)} \quad \forall\, \tau \in T ~~ \mbox{with } \rho(\tau) \le p\,, \\
\label{LIRKW:eqn:LIRKtype2-condition-TW}
\sum_j \Phi^2_j(\tau) = 0 \quad \forall\, \tau \in LW_2\backslash T ~~ \mbox{with } \rho(\tau) \le p\,.
\end{eqnarray}
\end{subequations}
Here $\rho(t)$ is the number of vertices of the tree $t$, and $\gamma(t)$ is the ``product of $\rho(t)$ and all orders of the trees which appear, if the roots, one after another, are removed from $t$'' \cite[Section II.2]{Hairer_book_I}.
\end{theorem}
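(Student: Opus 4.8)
The plan is to follow the argument behind Theorem~\ref{LIRKW:thm:LIRKWtype1-conditions}, tracking the structural differences between the type~2 scheme \eqref{LIRKW:eqn:lirkwtype2} and the type~1 scheme. The first step is to establish, by induction on $q$, a $B$-series representation of the $h=0$ derivatives of the internal stages,
\[
\left.(Y_i)^{(q)}\right|_{h=0} \;=\; \sum_{\tau\in LW_2,\ \rho(\tau)=q} \beta(\tau)\, \bar{\Phi}^2_i(\tau)\, F(\tau)(y_n),
\]
where $\beta(\tau)$ is the combinatorial tree-shape weight common to all $B$-series and $\bar{\Phi}^2_i$ is the coefficient map \eqref{LIRKW:eqn:reorderphibartype2}. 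The base case is immediate from \eqref{LIRKW:eqn:lirkwtype2}, and the induction step is exactly what the expansion \eqref{LIRKW:eqn:lirkwexpansiontype2} records: differentiating $h\,F(Y_j)$ and applying Fa{\`a} di Bruno's formula \eqref{LIRKW:eqn:faadibruno} to $(F(Y_j))^{(q-1)}$ glues the (inductively known) subtrees of order $\mu_1,\dots,\mu_m$ under a meagre root, producing the trees $\left[\tau_1,\dots,\tau_m\right]_\bullet$ with the factor $a_{i,j}$ --- the first branch of \eqref{LIRKW:eqn:reorderphibartype2}; differentiating $h\,\Lb\sum_j\gamma_{i,j}Y_j$ produces the fat-rooted trees $\left[\tau\right]_\circ$ with the factor $\gamma_{i,j}$ --- the second branch; and differentiating $h\,\gamma_{i,i}\,\Lb(t_n+h\gamma_{i,i})\,Y_i$ produces the square-prepended trees $\left[\tau\right]_{\theta_p}$ with the factor $\gamma_{i,i}^{p+1}$ --- the third branch. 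The identical computation on the update equation of \eqref{LIRKW:eqn:lirkwtype2} gives $\left.(y_{n+1})^{(q)}\right|_{h=0}=\sum_{\rho(\tau)=q}\beta(\tau)\big(\sum_j\Phi^2_j(\tau)\big)F(\tau)(y_n)$ with $\Phi^2_j$ as in \eqref{LIRKW:eqn:reorderphitype2}; here the essential point is that the linear part of the update carries only $\Lb$, never $\Lb_i=\Lb(t_n+h\gamma_{i,i})$, so no time-derivative of $\Lb$ can appear at the root of a tree in the numerical expansion --- this is precisely the ``no square root'' clause that distinguishes $LW_2$ from $LW_1$.

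The second step compares the numerical expansion with the classical Butcher expansion of the exact solution of \eqref{LIRKW:eqn:ode}, whose $q$-th derivative is $\sum_{\tau\in T,\ \rho(\tau)=q}\beta(\tau)\,\gamma(\tau)^{-1}\,F(\tau)(y_n)$ and involves only elementary differentials built from $F$ and its derivatives, i.e.\ trees with meagre vertices only. Treating $F$ and $\Lb$ as formally independent --- as in the Rosenbrock-W theory \cite[Section~IV.7]{Hairer_book_II} --- the elementary differentials attached to distinct $LW_2$-trees are linearly independent \cite[Section~II.2]{Hairer_book_I}, so the two expansions agree through order $p$ if and only if their coefficients match tree by tree. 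For $\tau\in T$ this forces $\sum_j\Phi^2_j(\tau)=1/\gamma(\tau)$; for $\tau\in LW_2\setminus T$, every such tree has at least one fat or square vertex and hence, by \eqref{LIRKW:eqn:recursivedifferential}, an elementary differential containing $\Lb$ that is simply absent from the exact expansion, so matching forces $\sum_j\Phi^2_j(\tau)=0$ --- the dichotomy already noted in the Remark in Section~\ref{LIRKW:sec:order-type-1}. These are exactly the conditions \eqref{LIRKW:eqn:LIRKWtype2-conditions}, and reading the equivalence backwards shows they are also sufficient: if all hold, every Taylor coefficient of the local error through order $p$ vanishes and the method has order $p$.

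The only genuinely new bookkeeping relative to Theorem~\ref{LIRKW:thm:LIRKWtype1-conditions} --- and the step I expect to require the most care --- is the exponent $p+1$ in the third branch of \eqref{LIRKW:eqn:reorderphibartype2}, versus the exponent $p$ in \eqref{LIRKW:eqn:reorderphibartype1}. One must check that in the term $h\,\gamma_{i,i}\,\Lb(t_n+h\gamma_{i,i})\,Y_i$ of the type~2 stage a Leibniz expansion of the $q$-th $h$-derivative forces exactly one derivative onto the explicit factor $h$ (higher derivatives of $h$ vanish, the zeroth gives $h=0$ at $h=0$), that each of the remaining $k$ derivatives falling on $\Lb(t_n+h\gamma_{i,i})$ contributes a chain-rule factor $\gamma_{i,i}$, and that the last $q-1-k$ derivatives land on $Y_i$; this yields $\gamma_{i,i}^{k}\cdot\gamma_{i,i}=\gamma_{i,i}^{k+1}$ times $(Y_i)^{(q-1-k)}$, reproducing the third term of \eqref{LIRKW:eqn:lirkwexpansiontype2} and hence the weight $\gamma_{i,i}^{p+1}$ for a fat node prepended by $p$ square nodes. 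Once this is verified, the remainder is a routine repetition of the type~1 argument and of the Rosenbrock-W order-condition machinery of \cite[Section~IV.7]{Hairer_book_II}.
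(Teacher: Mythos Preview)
Your proposal is correct and follows essentially the same approach as the paper's own proof, which is a one-line pointer to the preceding derivation of the expansion \eqref{LIRKW:eqn:lirkwexpansiontype2} together with the Rosenbrock-W order-condition machinery of \cite[Section~IV.7]{Hairer_book_II}. You have simply spelled out in detail what the paper leaves implicit, including the careful Leibniz accounting for the $\gamma_{i,i}^{p+1}$ weight on $[\tau]_{\theta_p}$ that distinguishes the type~2 recursion \eqref{LIRKW:eqn:reorderphibartype2} from its type~1 counterpart.
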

\begin{proof}
 The proof follows from our discussion, equation \eqref{LIRKW:eqn:lirkwexpansiontype2} and the order conditions of Rosenbrock-W methods \cite[Section IV.7]{Hairer_book_II}.
\end{proof}

\subsection{Order conditions for LIRK-W methods of type 3}

Here we derive order conditions for LIRK-W methods of type 3 \eqref{eqn:lirkwtype3}.  Making use of the expansion \eqref{LIRKW:eqn:faadibruno} we see that
\begin{subequations}
\label{LIRKW:eqn:lirkwexpansiontype3}
\begin{equation}
\left.\left(Y_i\right)^{(q)}\right|_{h=0} = q \displaystyle\sum_{j=1}^{i-1}a_{i,j} \displaystyle\sum_{m\geq 1}\frac{\partial^m F}{\partial y^m}\left(Y_j^{(\mu_1)}, \dots, Y_j^{(\mu_m)}\right) + q\displaystyle\sum_{j=1}^i \gamma_{i,j} \displaystyle\sum_{k=0}^{q-1}{q-1 \choose k}\gamma_{i,i}^k\left(\frac{d^k}{dt^k}\Lb\right)Y_j^{(q-1-k)}
\end{equation}
\begin{equation}
\left.\left(y_{n+1}\right)^{(q)}\right|_{h=0} = q \displaystyle\sum_{j=1}^s b_j \displaystyle\sum_{m>=1} \frac{\partial^m f}{\partial y^m}\left(Y_j^{(\mu_1)}, \dots, Y_j^{(\mu_m)}\right) + q\Lb \displaystyle\sum_{j=1}^s g_j \left(Y_j\right)^{(q-1)}
\end{equation}
\end{subequations}

Figures \ref{LIRKW:fig:lirkwtrees1} and \ref{LIRKW:fig:lirkwtrees2}, give the $LW_3$-trees, $\tau_i$, representing the elementary differentials, $f(\tau_i)$, present in the Taylor expansion of the exact and numerical solutions, as well as the coefficients of these terms, $\Phi^3(\tau_i)$ and $P(\tau_i)$ respectively.  The $LW_3$-trees are defined exactly as the $LW_2$-trees with only $\Phi^3(\tau)$ differing from its type 2 counterpart.  The recursive definition of $\Phi^3(\tau_i)$ is given by \eqref{LIRKW:eqn:recursiveordertype3}.

\begin{subequations}
\label{LIRKW:eqn:recursiveordertype3}
\begin{eqnarray}
\label{LIRKW:eqn:reorderphitype3}
\Phi^3_j(\tau) & = & \left\{ 
\begin{array}{lcrcl} 
b_j\bar{\Phi}^3_{j}(\tau_1)\cdots\bar{\Phi}^3_{j}(\tau_m) & \textrm{if} & \tau & = & \left[\tau_1, \dots, \tau_m\right]_{\bullet}, \\
g_j \bar{\Phi}^3_{j}(\tau_1) & \textrm{if} & \tau & = & \left[\tau_1\right]_\circ, \\
b_j & \textrm{if} & \tau & = & \left[ \; \right]_\bullet, \\
g_j & \textrm{if} & \tau & = & \left[ \; \right]_\circ,
\end{array} \right. \\
\label{LIRKW:eqn:reorderphibartype3}
\bar{\Phi}^2_j(\tau) & = & \left\{
\begin{array}{lcrcl}
a_{j,k} \bar{\Phi}^3_{k}(\tau_1) \cdots \bar{\Phi}^3_{k}(\tau_m) & \textrm{if} & \tau & = & \left[\tau_1, \dots, \tau_m\right]_\bullet, \\
\gamma_{j,k} \bar{\Phi}^3_{k}(\tau_1) & \textrm{if} & \tau & = & \left[ \tau_1 \right]_\circ, \\
\gamma_{jj}^{p}\gamma_{jk} \bar{\Phi}^3_k(\tau_1) & \textrm{if} & \tau & = & \left[ \tau_1 \right]_{\theta_p}, \\
a_{j,k} & \textrm{if} & \tau & = & \left[ \; \right]_\bullet, \\
\gamma_{j,k} & \textrm{if} & \tau & = & \left[ \; \right]_\circ, \\
\end{array} \right.
\end{eqnarray}
\end{subequations}

Similar to the reasoning for type 1 and 2 methods the order conditions for type 3 methods can be summarized by the following theorem.

\begin{theorem}[Order conditions for LIRK-W methods of type 3]\label{LIRKW:thm:LIRKWtype3-conditions}
A LIRK-W method of type 1 has order $p$ iff the following order conditions hold:
\begin{subequations}
\label{LIRKW:eqn:LIRKWtype3-conditions}
\begin{eqnarray}
\label{LIRKW:eqn:LIRKtype3-condition-T}
\sum_j \Phi^3_j(\tau) = \frac{1}{\gamma(\tau)} \quad \forall\, \tau \in T ~~ \mbox{with } \rho(\tau) \le p\,, \\
\label{LIRKW:eqn:LIRKtype3-condition-TW}
\sum_j \Phi^3_j(\tau) = 0 \quad \forall\, \tau \in LW_3\backslash T ~~ \mbox{with } \rho(\tau) \le p\,.
\end{eqnarray}
\end{subequations}
Here $\rho(t)$ is the number of vertices of the tree $t$, and $\gamma(t)$ is the ``product of $\rho(t)$ and all orders of the trees which appear, if the roots, one after another, are removed from $t$'' \cite[Section II.2]{Hairer_book_I}.
\end{theorem}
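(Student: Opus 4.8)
The plan is to mirror the arguments behind Theorems~\ref{LIRKW:thm:LIRKWtype1-conditions} and~\ref{LIRKW:thm:LIRKWtype2-conditions}, adapting only the place where the type 3 scheme \eqref{eqn:lirkwtype3} genuinely differs. First I would show, by induction on the derivative order $q$, that the expansions in \eqref{LIRKW:eqn:lirkwexpansiontype3} assemble into a B-series indexed by the set $LW_3$ of labelled trees: every term equals a scalar coefficient times an elementary differential $F^J(\tau)(y_n)$ built by the recursion \eqref{LIRKW:eqn:recursivedifferential}, and the scalar attached to the root stage $j$ is exactly $\Phi^3_j(\tau)$ as given by \eqref{LIRKW:eqn:recursiveordertype3}. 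The only new bookkeeping is the square-node case: because in \eqref{eqn:lirkwtype3} the single operator $\Lb_i$ multiplies the whole inner sum $\sum_{j\le i}\gamma_{i,j}Y_j$ and is evaluated at the shifted time $t_n+h\gamma_{i,i}$, each time-differentiation of $\Lb_i$ produces a factor $\gamma_{i,i}$ tied to the root stage $i$ while the coupling to the child stage $j$ still carries $\gamma_{i,j}$; this is precisely the branch $\gamma_{j,j}^{p}\gamma_{j,k}\bar\Phi^3_k(\tau_1)$ of \eqref{LIRKW:eqn:reorderphibartype3}, in contrast with $\gamma_{j,k}\gamma_{k,k}^{p}$ for type 1 and $\gamma_{j,j}^{p+1}$ for type 2. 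Since the admissible tree structure (fat vertices singly branched, no square root, square vertices singly branched with square or fat children) is unchanged from type 2, the set $LW_3$ coincides with $LW_2$ and only the weights $\Phi^3$ differ.

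Second, I would compare this with the B-series of the exact solution of \eqref{LIRKW:eqn:lirkwode}. Because $\Lb$ cancels once the linear/non-linear splitting is recollected, the exact solution depends only on $F$ and its derivatives, so its B-series is supported on trees with meagre vertices only, i.e.\ on $T\subset LW_3$, with coefficient $1/\gamma(\tau)$ in the normalization of \cite[Section~II.2]{Hairer_book_I}. Hence the local truncation error, through order $p$, is a linear combination of the elementary differentials $F^J(\tau)$ with $\tau\in LW_3$ and $\rho(\tau)\le p$, whose $\tau$-coefficient is $\sum_j\Phi^3_j(\tau)-1/\gamma(\tau)$ for $\tau\in T$ and $\sum_j\Phi^3_j(\tau)$ for $\tau\in LW_3\backslash T$. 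If all of these vanish the error is $O(h^{p+1})$, which is the ``if'' direction.

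For the ``only if'' direction I would invoke the standard independence argument: the method must have order $p$ for every right-hand side $F$ and every matrix $\Lb$, and for such arbitrary data the elementary differentials attached to distinct $LW_3$-trees are linearly independent, so a vanishing order-$p$ error forces each $\tau$-coefficient to vanish separately. The main obstacle I anticipate is the inductive matching in the first step --- specifically verifying the square-node branch of \eqref{LIRKW:eqn:recursiveordertype3} against the $\gamma_{i,i}^{k}$ powers appearing in \eqref{LIRKW:eqn:lirkwexpansiontype3}, since this is the one computation that genuinely separates type 3 from types 1 and 2; once it is in place, the equivalence with the Rosenbrock-W order-condition framework \cite[Section~IV.7]{Hairer_book_II} closes the argument exactly as in the two preceding proofs.
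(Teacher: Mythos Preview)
Your proposal is correct and follows essentially the same approach as the paper: the paper's own proof is a one-line appeal to ``our discussion, equation \eqref{LIRKW:eqn:lirkwexpansiontype3} and the order conditions of Rosenbrock-W methods \cite[Section IV.7]{Hairer_book_II},'' and your outline simply spells out what that appeal entails. In particular, your identification of the square-node bookkeeping as the only genuinely new ingredient relative to types~1 and~2 matches the structure of the surrounding section.
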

\begin{proof}
 The proof follows from our discussion, equation \eqref{LIRKW:eqn:lirkwexpansiontype3} and the order conditions of Rosenbrock-W methods \cite[Section IV.7]{Hairer_book_II}.
\end{proof}
%

\section{Linear stability of LIRK-W methods}
\label{LIRKW:sec:stability}

To examine the linear stability of the proposed method we solve the linear test problem
\begin{equation}
\label{LIRKW:eqn:splitlinearode}
 \frac{dy}{dt} = \Lb y + \left(\mathbf{J} - \Lb \right)y\,,~~~ t_0 \leq t \leq t_F\,, \quad y(t_0) = y_n\,; \quad  y(t) \in \R^N\, \quad \mathbf{J},\,\Lb \in \R^{N \times N}.
\end{equation}
and apply the method (\ref{LIRKW:eqn:lirkwimex}) in compact form 
\begin{subequations}
\label{LIRKW:eqn:lirkwcompact}
\begin{eqnarray}
\label{LIRKW:eqn:lirkwcompact1}
\mathbf{Y} & = & \mathds{1}\otimes y_n + \left[\mathbf{A}\otimes\left(h \mathbf{J}-h \Lb(h)\right)+\mathbf{\widehat{A}} \otimes h\Lb(h)\right]\,\mathbf{Y}, \\
\label{LIRKW:eqn:lirkwcompact2}
y_{n+1} & = & y_n + \left[ \mathbf{b}^T \otimes \left(h \mathbf{J} - h \Lb(h)\right)+\mathbf{\widehat{b}}^T \otimes h\Lb(h) \right]\,\mathbf{Y},
\end{eqnarray}
\end{subequations}
where
\[
\mathbf{A} = \left[ \begin{array}{ccc} a_{1,1} & \dots & a_{1,s} \\ 
				\vdots & \ddots & \vdots \\
				a_{s,1} & \dots & a_{s,s} \end{array}\right], \quad 
\mathbf{\widehat{A}} = \left[ \begin{array}{ccc} \widehat{a}_{1,1} & \dots & \widehat{a}_{1,s} \\ 
				\vdots & \ddots & \vdots \\
				\widehat{a}_{s,1} & \dots & \widehat{a}_{s,s} \end{array}\right], \quad 
\mathbf{b} = \left[ \begin{array}{c} b_1 \\ \vdots \\ b_s \end{array} \right], \quad
\mathbf{\widehat{b}} = \left[ \begin{array}{c} \widehat{b}_1 \\ \vdots \\ \widehat{b}_s \end{array} \right]
\]
and
\[
\mathbf{Y} = \left[ \begin{array}{ccc} Y_1^T & \dots & Y_s^T \end{array} \right]^T.
\]
\subsection{Type 1 methods}

Application of a type 1 method \eqref{LIRKW:eqn:lirkwtype1} to \eqref{LIRKW:eqn:splitlinearode} gives:
\begin{eqnarray*}
Y_i  &=& y_n +   h\, \sum_{j=1}^{i-1}\, a_{i,j}\, (\mathbf{J}-\Lb_j)\, Y_j  +  h\,\sum_{j=1}^{i}\, \widehat{a}_{i,j}\,\Lb_j\, Y_j,\\
y_{n+1} &=&  y_n +   h\, \sum_{i=1}^s\, b_{i}\, (\mathbf{J}- \Lb_i)\, Y_i  +   h\, \sum_{i=1}^s\, \widehat{b}_{i}\, \Lb_i\, Y_i.
\end{eqnarray*}
In compact notation we have:
\begin{eqnarray*}
\mathbf{Y} & = & \mathds{1}\otimes y_n + \left[(\mathbf{A}\otimes\I_N)\, \underset{i=1, \dots, s}{\textnormal{blkdiag}}\left(h \mathbf{J}-h \Lb_i\right)+(\mathbf{\widehat{A}} \otimes \I_N)\,\underset{i=1, \dots, s}{\textnormal{blkdiag}}\left(h \Lb_i\right)\right]\,\mathbf{Y}, \\
y_{n+1} & = & y_n +\left[(\mathbf{b}^T\otimes\I_N)\, \underset{i=1, \dots, s}{\textnormal{blkdiag}}\left(h \mathbf{J}-h \Lb_i\right)+(\mathbf{\widehat{b}}^T \otimes \I_N)\,\underset{i=1, \dots, s}{\textnormal{blkdiag}}\left(h \Lb_i\right)\right]\,\mathbf{Y}.
\end{eqnarray*}
Solving for $y_{n+1}$ in terms of $y_n$ leads to the transfer matrix:
\begin{eqnarray*}
y_{n+1} & = & R(h \mathbf{J},h \Lb_1,\dots,h \Lb_s)\, y_n, \\
R(h \mathbf{J},h \Lb_1,\dots,h \Lb_s) & = & \I +\left[(\mathbf{b}^T\otimes\I_N)\, \underset{i=1, \dots, s}{\textnormal{blkdiag}}\left(h \mathbf{J}-h \Lb_i\right)+(\mathbf{\widehat{b}}^T \otimes \I_N)\,\underset{i=1, \dots, s}{\textnormal{blkdiag}}\left(h \Lb_i\right)\right]\cdot \\
&& \left[\I - (\mathbf{A}\otimes\I_N)\, \underset{i=1, \dots, s}{\textnormal{blkdiag}}\left(h \mathbf{J}-h \Lb_i\right) - (\mathbf{\widehat{A}} \otimes \I_N)\,\underset{i=1, \dots, s}{\textnormal{blkdiag}}\left(h \Lb_i\right)  \right]^{-1}\,(\mathds{1} \otimes \I).
\end{eqnarray*}
For a stiffly accurate method with 
\begin{equation}
\label{LIRKW:eqn:stiff-accuracy}
 \mathbf{b}^T = \mathbf{e}_s^T\mathbf{A} \qquad \textnormal{and} \qquad \mathbf{\widehat{b}}^T = \mathbf{e}_s^T \mathbf{\widehat{A}},
\end{equation}
standard calculations give:
\begin{eqnarray*}
R(h \mathbf{J},h \Lb_1,\dots,h \Lb_s) & = & (\mathbf{e}_s^T\otimes\I) \cdot  \left[\I - (\mathbf{A}\otimes\I_N)\, \underset{i=1, \dots, s}{\textnormal{blkdiag}}\left(h \mathbf{J}-h \Lb_i\right) - (\mathbf{\widehat{A}} \otimes \I_N)\,\underset{i=1, \dots, s}{\textnormal{blkdiag}}\left(h \Lb_i\right)  \right]^{-1}\,(\mathds{1} \otimes \I).
\end{eqnarray*}
This matrix goes to zero for stiff linear parts:
\[
\Vert h \Lb_i \Vert \to \infty, ~~ i=1,\dots,s \quad \Rightarrow \quad R(h \mathbf{J},h \Lb_1,\dots,h \Lb_s) \to 0.
\]

\subsection{Type 2 methods}

Application of a type 2 method \eqref{LIRKW:eqn:lirkwtype2} to \eqref{LIRKW:eqn:splitlinearode} gives: 
\begin{eqnarray*}
Y_i  &=& y_n +   h\, \sum_{j=1}^{i-1}\, a_{i,j}\,   F(Y_j)  +  h\,\Lb \sum_{j=1}^{i-1}\, \gamma_{i,j}\,\, Y_j + h\gamma_{i,i}\Lb_i Y_i, \\
y_{n+1} &=&  y_n +   h\, \sum_{i=1}^s\, b_i\,   F(Y_i)  +   h\, \Lb\, \sum_{i=1}^s\, g_i\,  Y_i.
\end{eqnarray*}
In compact form:
\begin{eqnarray*}
\mathbf{Y} & = & \mathds{1}\otimes y_n + \left[\mathbf{A}\otimes\left(h \mathbf{J}-h \Lb\right)+\mathbf{\widehat{A}} \otimes h\Lb+ \underset{i=1, \dots, s}{\textnormal{blkdiag}}\big(\gamma_{i,i}(h\Lb_i-h\Lb)\big) \right]\,\mathbf{Y}, \\
y_{n+1} & = & y_n + \left[ \mathbf{b}^T \otimes \left(h \mathbf{J} - h \Lb\right)+\mathbf{\widehat{b}}^T \otimes h\Lb \right]\,\mathbf{Y},
\end{eqnarray*}
where $\textnormal{blkdiag}$ denotes a block-diagonal matrix with the blocks given by its arguments; the block indices should be clear from the context.
The transfer matrix reads:
\begin{eqnarray*}
R(h \mathbf{J},h \Lb_1,\dots,h \Lb_s) & = & \I +\left[(\mathbf{b}^T\otimes\I_N)\, \big( \I_s \otimes (h \mathbf{J}-h \Lb) \big)+(\mathbf{\widehat{b}}^T \otimes \I_N)\,\big( \I_s \otimes h \Lb \big)\right]\cdot \\
&& \left[\I - (\mathbf{A}\otimes\I_N)\, \big( \I_s \otimes (h \mathbf{J}-h \Lb) \big) - (\mathbf{\widehat{A}} \otimes \I_N)\,\big( \I_s \otimes h \Lb \big) \right. \\
& & \quad \left.-\underset{i=1, \dots, s}{\textnormal{blkdiag}}\big(\gamma_{i,i}(h\Lb_i-h\Lb)\big) \vphantom{\widehat{A}} \right]^{-1}\,(\mathds{1} \otimes \I).
\end{eqnarray*}
For a stiffly accurate method \eqref{LIRKW:eqn:stiff-accuracy} standard calculations give:
\begin{eqnarray*}
R(h \mathbf{J},h \Lb_1,\dots,h \Lb_s) & = & (\mathbf{e}_s^T\otimes\I) \cdot  
 \left[\I  -\underset{i=1, \dots, s}{\textnormal{blkdiag}}\big(\gamma_{i,i}(h\Lb_i-h\Lb)\big)  \right] \\
&& \cdot \left[\I - (\mathbf{A}\otimes\I_N)\, \big( \I_s \otimes (h \mathbf{J}-h \Lb) \big) - (\mathbf{\widehat{A}} \otimes \I_N)\,\big( \I_s \otimes h \Lb \big) \right.\\ 
&& \quad \left. \vphantom{\widehat{A}} -\underset{i=1, \dots, s}{\textnormal{blkdiag}}\big(\gamma_{i,i}(h\Lb_i-h\Lb)\big)  \right]^{-1}\,(\mathds{1} \otimes \I) \\
& = & (\mathbf{e}_s^T\otimes\I) \cdot  
 \left[\I  -\underset{i=1, \dots, s}{\textnormal{blkdiag}}\big(\gamma_{i,i}(h\Lb_i-h\Lb)\big)  \right] \\
&& \cdot \Big[\I - \mathbf{A}\otimes\left(h \mathbf{J}\right) - \textnormal{tril}(\boldsymbol{\Gamma}) \otimes \left(h \Lb\right) -\underset{i=1, \dots, s}{\textnormal{blkdiag}}\big(\gamma_{i,i}(h\Lb_i-h\Lb)\big)  \Big]^{-1}\,(\mathds{1} \otimes \I) \\
& = & (\mathbf{e}_s^T\otimes\I) \cdot  
  \\
&& \cdot \Big[\I - \Big( \mathbf{A}\otimes\left(h \mathbf{J}\right) + \textnormal{tril}(\boldsymbol{\Gamma}) \otimes \left(h \Lb\right) \Big)\,\big[\I  -\underset{i}{\underset{i=1, \dots, s}{\textnormal{blkdiag}}}\big(\gamma_{i,i}(h\Lb_i-h\Lb)\big)  \big]^{-1}  \Big]^{-1}\,(\mathds{1} \otimes \I),
\end{eqnarray*}
where we denote
\[
 \textnormal{tril}(\boldsymbol{\Gamma}) = \boldsymbol{\Gamma} - \underset{1=1,\dots,s}{\textnormal{diag}}\big(\gamma_{i,i}\big).
\]

The stability matrix goes to zero when $\Lb_i \approx \Lb$ in the sense that the difference increases slower than the matrices with increasing stiffness:
\begin{equation*}
\frac{\Vert h\Lb \Vert}{\Vert h\Lb - h\Lb_i \Vert} \to \infty, ~~ i=1,\dots,s \quad \Rightarrow  \quad R(h \mathbf{J},h \Lb_1,\dots,h \Lb_s) \to 0.
\end{equation*}
or
\begin{equation*}
\frac{\Vert h\mathbf{J} \Vert}{\Vert h\Lb - h\Lb_i \Vert} \to \infty, ~~ i=1,\dots,s \quad \Rightarrow  \quad R(h \mathbf{J},h \Lb_1,\dots,h \Lb_s) \to 0.
\end{equation*}

Alternatively, the stability matrix goes to one when the time dependent linear term $h\L_i$ grows more quickly than either $h\J$ or $h\L$ with increasing stiffness, so that
\begin{equation*}
\frac{\Vert h\Lb \Vert + \Vert h\mathbf{J} \Vert}{\Vert h\Lb_i \Vert} \to 0,~~ i=1,\dots,s \quad \Rightarrow \quad R(h \mathbf{J},h \Lb_1,\dots,h \Lb_s) \to 1.
\end{equation*}
From this it is clear that the stability of the method is dependent on the choice of both $\Lb$ and $\Lb_i$.

\section{Construction of Practical LIRK-W Methods}
\label{LIRKW:sec:construction}

\subsection{A third-order LIRK-W method of type 1}
\label{LIRKW:sec:construction-1}
For stability considerations we seek methods which are stiffly accurate \eqref{LIRKW:eqn:stiff-accuracy}, i.e.,
\begin{equation}
\label{LIRKW:eqn:stiffaccurate}
b_i = a_{s,i} \quad \textnormal{and} \quad \widehat{b}_i = \widehat{a}_{s,i}
\quad \textnormal{for} \quad i=1,\dots,s.
\end{equation}
Moreover we wish to make use of an approximate matrix factorization, specifically the alternating directions approximation.  Our choice of general form \eqref{LIRKW:eqn:lirkwtype1} and the structure of $\widetilde{\Lb}_i$ in equation \eqref{LIRKW:eqn:Ltilde} naturally leads to the choice that
\begin{equation}
\label{LIRKW:eqn:diaggam}
\widehat{a}_{i,i} = \displaystyle\sum_{j=1}^{i-1} a_{i,j}.
\end{equation}
Finally, for convenience we require that 
\begin{equation}
\label{LIRKW:eqn:cischat}
\displaystyle\sum_{j=1}^{i-1} a_{i,j} = \displaystyle\sum_{j = 1}^i \widehat{a}_{i,j}
\end{equation}

The requirements imposed by (\ref{LIRKW:eqn:stiffaccurate}), (\ref{LIRKW:eqn:diaggam}), and (\ref{LIRKW:eqn:cischat}) taken together imply that a method with general form (\ref{LIRKW:eqn:lirkw}) must have the additional following properties:
\begin{equation}
\label{LIRKW:eqn:lirkwreq}
\displaystyle\sum_{j = 1}^i \gamma_{i,j} = 0, \quad \gamma_{i,i} = \displaystyle\sum_{j = 1}^{i-1}a_{i,j}, \quad \displaystyle\sum_{j = 1}^{i-1} \gamma_{i,j} = -\gamma_{i,i}, \quad \displaystyle\sum_{i = 1}^s g_i = 0, \quad g_i = \gamma_{s,i}.
\end{equation}
Interestingly, the first of these conditions, implied by (\ref{LIRKW:eqn:cischat}), automatically satisfies all order conditions coming from trees in figures \ref{LIRKW:fig:lirkwtrees1} and \ref{LIRKW:fig:lirkwtrees2} which end in a fat node not directly preceded by a square node.  Additionally, the simplifications in equations \eqref{LIRKW:eqn:stiffaccurate}, \eqref{LIRKW:eqn:diaggam}, \eqref{LIRKW:eqn:cischat}, and \eqref{LIRKW:eqn:lirkwreq} means that several order conditions reduce to the same as those coming from other trees, so that:
\begin{equation*}
\Phi(\tau_5) = \Phi(\tau_7), \quad \Phi(\tau_{14}) = \Phi(\tau_{23}), \quad \Phi(\tau_{15}) = \Phi(\tau_{20}), \quad \Phi(\tau_{22}) = \Phi(\tau_{19}).
\end{equation*}

In this way number of order conditions required to construct such a method of order three reduces from the original twenty-three to a much more reasonable nine.  Figure \ref{LIRKW:fig:lirkwtreered} shows these trees and the order conditions associated with them.

These conditions, with significant simplification coming from equations (\ref{LIRKW:eqn:stiffaccurate}), (\ref{LIRKW:eqn:diaggam}), (\ref{LIRKW:eqn:cischat}), and (\ref{LIRKW:eqn:lirkwreq}) as well as some algebraic manipulation are given for a five-stage method as:
\begin{subequations}
\label{LIRKW:eqn:lirkwequations}
\begin{eqnarray}
 \label{LIRKW:eqn:lirkwa}
 a_{5,1} + a_{5,2} + a_{5,3} + a_{5,4} & = & 1\\
 a_{5,2}a_2 + a_{5,3}a_3 + a_{5,4}a_4  & = & \frac{1}{2}\\
 \gamma_{5,2}a_2 + \gamma_{5,3}a_3 + \gamma_{5,4}a_4 & = & -1 \\
 a_{5,2}a_2^2 + a_{5,3}a_3^2 + a_{5,4}a_4^2 & = & \frac{1}{3}\\
 a_{5,3}a_{3,2}a_2 + a_{5,4}a_{4,3}a_3 + a_{5,4}a_{4,2}a_2 & = & \frac{1}{6}\\
 a_{5,3}\gamma_{3,2}a_2 + a_{5,4}\gamma_{4,3}a_3 + a_{5,4}\gamma_{4,2}a_2 & = & -\frac{1}{3} \\
 \gamma_{5,3}\gamma_{3,2}a_2 + \gamma_{5,4}\gamma_{4,3}a_3 + \gamma_{5,4}\gamma_{4,2}a_2 & = & 1 \\
 \gamma_{5,3}a_{3,2}a_2 + \gamma_{5,4}a_{4,3}a_3 + \gamma_{5,4}a_{4,2}a_2 & = & -\frac{1}{2}\\
 \gamma_{5,2}a_2^2 + \gamma_{5,3}a_3^2 + \gamma_{5,4}a_4^2 & = & -1
\end{eqnarray}
\end{subequations}

\begin{figure}[htp]
\begin{center}
\def\arraystretch{1.5}
\footnotesize
\begin{tabular}{|>{\centering\arraybackslash}m{1in}|>{\centering\arraybackslash}m{1in}|>{\centering\arraybackslash}m{1in}|>{\centering\arraybackslash}m{0.6in}>{\centering\arraybackslash}m{0.1in}>{\centering\arraybackslash}m{0.2in}|}
\hline
$\tau_{1}$ & 
\begin{tikzpicture}[scale=.5]
      \meagrenode (j) at (0,0) [label=right:$j$] {};
  \end{tikzpicture} & $f^J$ & $\sum b_j$ & = & 1 \\
\hline
$\tau_{3}$ & 
\begin{tikzpicture}[scale=.5]
      \meagrenode (j) at (0,0) [label=right:$j$] {};
      \meagrenode (k) at (1,1) [label=right:$k$] {};
      \draw[-] (j) -- (k);
  \end{tikzpicture} & $f^J_K f^K$ & $\sum b_j a_{j,k} $ & = & 1/2 \\
\hline
$\tau_{5}$ & 
\begin{tikzpicture}[scale=.5]
      \fatnode (j) at (0,0) [label=right:$j$] {};
      \meagrenode (k) at (1,1) [label=right:$k$] {};
      \draw[-] (j) -- (k);
  \end{tikzpicture} & $\Lb_{JK}f^K$ & $\sum g_j a_{j,k}$ & = & 0 \\
\hline
$\tau_{8}$ & 
\begin{tikzpicture}[scale=.5]
      \meagrenode (j) at (1,0) [label=right:$j$] {};
      \meagrenode (k) at (0,1) [label=left:$k$] {};
      \meagrenode (l) at (2,1) [label=right:$l$] {};
      \draw[-] (j) -- (k);
      \draw[-] (j) -- (l);
  \end{tikzpicture} & $f^J_{KL}f^Kf^L$ & $\sum b_j a_{j,k}a_{j,l}$ & = & 1/3 \\
\hline
$\tau_{11}$ & 
\begin{tikzpicture}[scale=.5]
      \meagrenode (j) at (0,0) [label=right:$j$] {};
      \meagrenode (k) at (1,1) [label=right:$k$] {};
      \meagrenode (l) at (0,2) [label=left:$l$] {};
      \draw[-] (j) -- (k);
      \draw[-] (k) -- (l);
  \end{tikzpicture} & $f^J_K f^K_L f^L$ & $\sum b_j a_{j,k}a_{k,l}$ & = & 1/6 \\
\hline
$\tau_{14}$ & 
\begin{tikzpicture}[scale=.5]
      \meagrenode (j) at (0,0) [label=right:$j$] {};
      \fatnode (k) at (1,1) [label=right:$k$] {};
      \meagrenode (l) at (0,2) [label=left:$l$] {};
      \draw[-] (j) -- (k);
      \draw[-] (k) -- (l);
  \end{tikzpicture} & $f^J_K \Lb_{KL}f^L$ & $\sum b_j \gamma_{j,k} a_{k,l}$ & = & 0 \\
\hline
$\tau_{15}$ & 
\begin{tikzpicture}[scale=.5]
      \fatnode (j) at (0,0) [label=right:$j$] {};
      \fatnode (k) at (1,1) [label=right:$k$] {};
      \meagrenode (l) at (0,2) [label=left:$l$] {};
      \draw[-] (j) -- (k);
      \draw[-] (k) -- (l);
  \end{tikzpicture} & $\Lb_{JK}\Lb_{KL}f^L$ & $\sum g_j \gamma_{j,k}a_{k,l}$ & = & 0 \\
\hline
$\tau_{16}$ & 
\begin{tikzpicture}[scale=.5]
      \fatnode (j) at (0,0) [label=right:$j$] {};
      \meagrenode (k) at (1,1) [label=right:$k$] {};
      \meagrenode (l) at (0,2) [label=left:$l$] {};
      \draw[-] (j) -- (k);
      \draw[-] (k) -- (l);
  \end{tikzpicture} & $\Lb_{JK}f^K_Lf^L$ & $\sum g_j a_{j,k} a_{k,l}$ & = & 0 \\
\hline
$\tau_{22}$ & 
\begin{tikzpicture}[scale=.5]
      \fatnode (j) at (0,0) [label=right:$j$] {};
      \boxnode (k) at (1,1) [label=right:$k$] {};
      \fatnode (l) at (0,2) [label=left:$l$] {};
      \draw[-] (j) -- (k);
      \draw[-] (k) -- (l);
  \end{tikzpicture} & $\Lb_{JK}\Lb'_{KL}y^L$ & $\sum g_j\gamma_{j,j}^2$ & = & 0 \\
\hline
\end{tabular}
\caption{Butcher trees and LIRK-W conditions up to order three for a type 1 method.}
\label{LIRKW:fig:lirkwtreered}
\end{center}
\end{figure}

Table \ref{LIRKW:table:lirkwcoef} gives the coefficients for a type 1 LIRK-W method of third order.  

\begin{table}
\begin{equation*}
\begin{array}{rcl}
a & = & \begin{bmatrix} 	
0 				& 					&			 		&	 				& \\ 
0.520300000000000 	& 0					& 					& 					& \\
0.026500000000000	& 0.938000000000000	& 0					&					& \\
0.122175553766880	& 0.105600000000000	& 0.018300000000000	& 0					& \\
-0.033950868284890& 0.218016324016351	& 0.258600000000000	& 0.557334544268539	& 0\end{bmatrix} \\
\\
\gamma & = & \begin{bmatrix}
0				&					&					&					& \\
-0.520300000000000& 0.520300000000000	& 					&					& \\
0.911500000000000 & -1.876000000000000	& 0.964500000000000	& 					& \\
-0.401069249711528& 0.663393695944647	& -0.508400000000000	& 0.246075553766880	& \\
-0.155925222099085& -0.084089256959580	& -1.070724285228281	& 0.310738764286946	& 1 \end{bmatrix} \\
\\
b & = & \begin{bmatrix}  -0.033950868284890& 0.218016324016351	& 0.258600000000000	& 0.557334544268539	& 0\end{bmatrix} \\
\\
g & = & \begin{bmatrix} -0.155925222099085& -0.084089256959580	& -1.070724285228281	& 0.310738764286946	& 1 \end{bmatrix}
\end{array}
\end{equation*}
\caption{Method coefficients for a third order LIRK-W method of type 1.}
\label{LIRKW:table:lirkwcoef}
\end{table}

\subsection{Third-order LIRK-W methods of type 2}
\label{LIRKW:sec:construction-2}

For stability considersations we again seek a method which is stiffly accurate, and so satisfies \eqref{LIRKW:eqn:stiffaccurate}, as well as the condition that \eqref{LIRKW:eqn:cischat} to reduce the number of required order conditions.  Due to the difference in general forms we no longer impose the condition \eqref{LIRKW:eqn:diaggam}, which provides the freedom to impose the condition that
\begin{equation}
\label{LIRKW:eqn:gammadiag}
\gamma_{i,i} = \left\{ \begin{array}{rcl} 0 & \textrm{for} & i = 1 \\ 
									\gamma & \textrm{for} & i = 2, \dots, s \end{array} \right.
\end{equation}

Taking requirements imposed by \eqref{LIRKW:eqn:stiffaccurate}, \eqref{LIRKW:eqn:cischat}, and \eqref{LIRKW:eqn:gammadiag} taken together imply that a method with general form \eqref{LIRKW:eqn:lirkwtype2} must have the additional following properties:
\begin{equation}
\label{LIRKW:eqn:lirkwreqtype2}
\displaystyle\sum_{j = 1}^i \gamma_{i,j} = 0, \quad \displaystyle\sum_{j = 1}^{i-1} \gamma_{i,j} = -\gamma_{i,i}, \quad \displaystyle\sum_{i = 1}^s g_i = 0, \quad g_i = \gamma_{s,i}.
\end{equation}
Once again the first of these conditions, implied by (\ref{LIRKW:eqn:cischat}), automatically satisfies all order conditions coming from trees in figures \ref{LIRKW:fig:lirkwtrees1} and \ref{LIRKW:fig:lirkwtrees2} which end in a fat node not directly preceded by a square node.  In this way the number of required order conditions required to construct such a method of order three reduces from the original nineteen to a much more reasonable ten. Figure \ref{LIRKW:fig:lirkwtreeredtype2} shows these trees and the order conditions associated with them.

\begin{figure}[htp]
\begin{center}
\def\arraystretch{1.5}
\footnotesize
\begin{tabular}{|>{\centering\arraybackslash}m{1in}|>{\centering\arraybackslash}m{1in}|>{\centering\arraybackslash}m{1in}|>{\centering\arraybackslash}m{0.6in}>{\centering\arraybackslash}m{0.1in}>{\centering\arraybackslash}m{0.2in}|}
\hline
$\tau_1$ & 
\begin{tikzpicture}[scale=.5]
      \meagrenode (j) at (0,0) [label=right:$j$] {};
  \end{tikzpicture} & $f^J$ & $\sum b_j$ & = & 1 \\
\hline
$\tau_3$ & 
\begin{tikzpicture}[scale=.5]
      \meagrenode (j) at (0,0) [label=right:$j$] {};
      \meagrenode (k) at (1,1) [label=right:$k$] {};
      \draw[-] (j) -- (k);
  \end{tikzpicture} & $f^J_K f^K$ & $\sum b_j a_{j,k} $ & = & 1/2 \\
\hline
$\tau_5$ & 
\begin{tikzpicture}[scale=.5]
      \fatnode (j) at (0,0) [label=right:$j$] {};
      \meagrenode (k) at (1,1) [label=right:$k$] {};
      \draw[-] (j) -- (k);
  \end{tikzpicture} & $\Lb_{JK}f^K$ & $\sum g_j a_{j,k}$ & = & 0 \\
\hline
$\tau_8$ & 
\begin{tikzpicture}[scale=.5]
      \meagrenode (j) at (1,0) [label=right:$j$] {};
      \meagrenode (k) at (0,1) [label=left:$k$] {};
      \meagrenode (l) at (2,1) [label=right:$l$] {};
      \draw[-] (j) -- (k);
      \draw[-] (j) -- (l);
  \end{tikzpicture} & $f^J_{KL}f^Kf^L$ & $\sum b_j a_{j,k}a_{j,l}$ & = & 1/3 \\
\hline
$\tau_{11}$ & 
\begin{tikzpicture}[scale=.5]
      \meagrenode (j) at (0,0) [label=right:$j$] {};
      \meagrenode (k) at (1,1) [label=right:$k$] {};
      \meagrenode (l) at (0,2) [label=left:$l$] {};
      \draw[-] (j) -- (k);
      \draw[-] (k) -- (l);
  \end{tikzpicture} & $f^J_K f^K_L f^L$ & $\sum b_j a_{j,k}a_{k,l}$ & = & 1/6 \\
\hline
$\tau_{14}$ & 
\begin{tikzpicture}[scale=.5]
      \meagrenode (j) at (0,0) [label=right:$j$] {};
      \fatnode (k) at (1,1) [label=right:$k$] {};
      \meagrenode (l) at (0,2) [label=left:$l$] {};
      \draw[-] (j) -- (k);
      \draw[-] (k) -- (l);
  \end{tikzpicture} & $f^J_K \Lb_{KL}f^L$ & $\sum b_j \gamma_{j,k} a_{k,l}$ & = & 0 \\
\hline
$\tau_{15}$ & 
\begin{tikzpicture}[scale=.5]
      \fatnode (j) at (0,0) [label=right:$j$] {};
      \fatnode (k) at (1,1) [label=right:$k$] {};
      \meagrenode (l) at (0,2) [label=left:$l$] {};
      \draw[-] (j) -- (k);
      \draw[-] (k) -- (l);
  \end{tikzpicture} & $\Lb_{JK}\Lb_{KL}f^L$ & $\sum g_j \gamma_{j,k}a_{k,l}$ & = & 0 \\
\hline
$\tau_{16}$ & 
\begin{tikzpicture}[scale=.5]
      \fatnode (j) at (0,0) [label=right:$j$] {};
      \meagrenode (k) at (1,1) [label=right:$k$] {};
      \meagrenode (l) at (0,2) [label=left:$l$] {};
      \draw[-] (j) -- (k);
      \draw[-] (k) -- (l);
  \end{tikzpicture} & $\Lb_{JK}f^K_Lf^L$ & $\sum g_j a_{j,k} a_{k,l}$ & = & 0 \\
\hline
$\tau_{18}$ & 
\begin{tikzpicture}[scale=.5]
      \fatnode (j) at (0,0) [label=right:$j$] {};
      \boxnode (k) at (1,1) [label=right:$k$] {};
      \fatnode (l) at (0,2) [label=left:$l$] {};
      \draw[-] (j) -- (k);
      \draw[-] (k) -- (l);
  \end{tikzpicture} & $\Lb_{JK}\Lb'_{KL}y^L$ & $g_j\gamma_{j,j}^2$ & = & 0 \\
\hline
$\tau_{19}$ &
\begin{tikzpicture}[scale=.5]
      \meagrenode (j) at (0,0) [label=right:$j$] {};
      \boxnode (k) at (1,1) [label=right:$k$] {};
      \fatnode (l) at (0,2) [label=left:$l$] {};
      \draw[-] (j) -- (k);
      \draw[-] (k) -- (l);
  \end{tikzpicture} & $f^J_K\Lb'_{KL}y^L$ & $b_j \gamma_{j,j}^2$ & = & 0 \\
\hline
\end{tabular}
\caption{Butcher trees and LIRK-W conditions up to order three for a type 2 method.}
\label{LIRKW:fig:lirkwtreeredtype2}
\end{center}
\end{figure}

Examining the order conditions coming from the first and last two trees of figure \ref{LIRKW:fig:lirkwtreeredtype2} leads to the additional strategic choice that 
\begin{equation}
\label{LIRKW:eqn:strategicchoice}
a_{5,1} = 1, \quad \gamma_{5,1} = 0
\end{equation}
so that the order conditions coming from $\tau_{18}$ and $\tau_{19}$ are automatically satisfied.

These conditions, with significant simplification coming from equations (\ref{LIRKW:eqn:stiffaccurate}),  (\ref{LIRKW:eqn:cischat}), (\ref{LIRKW:eqn:lirkwreqtype2}), and \eqref{LIRKW:eqn:strategicchoice} as well as some algebraic manipulation are given for a five-stage method as:

\begin{subequations}
\label{LIRKW:eqn:lirkwequationstype2}
\begin{eqnarray}
a_{5,2} + a_{5,3} + a_{5,4} & = & 0 \\
\gamma_{5,2} + \gamma_{5,3} + \gamma_{5,4} & = & -\gamma \\
a_{5,2}a_2 + a_{5,3}a_3 + a_{5,4}a_4 & = & \frac{1}{2} \\
\gamma_{5,2}a_2 + \gamma_{5,3}a_3 + \gamma_{5,4}a_4 & = & -\gamma \\
a_{5,2}a_2^2 + a_{5,3}a_3^2 + a_{5,4}a_4^2 & = & \frac{1}{3} \\
a_{5,3}a_{3,2}a_2 + a_{5,4}a_{4,2}a_2 + a_{5,4}a_{4,3}a_3 & = & \frac{1}{6} \\
a_{5,3}\gamma_{3,2}a_2 + a_{5,4}\gamma_{4,2}a_2 + a_{5,4}\gamma_{4,3}a_3 & = & -\frac{1}{2}\gamma \\
\gamma_{5,3}\gamma_{3,2}a_2 + \gamma_{5,4}\gamma_{4,2}a_2 + \gamma_{5,4}\gamma_{4,3}a_3 & = & \gamma^2 \\
\gamma_{5,3}a_{3,2}a_2 + \gamma_{5,4}a_{4,2}a_2 + \gamma_{5,4}a_{4,3}a_3 & = & -\frac{1}{2}\gamma
\end{eqnarray}
\end{subequations}

\begin{table}
\begin{equation*}
\begin{array}{rcl}
a & = & \begin{bmatrix} 	
0 		&				&			 		&	 				& \\ 
\frac{1}{6} 	& 0				& 					& 					& \\
\frac{1}{3} - \frac{9 \gamma +2 \gamma_{5,4}}{3 (5 \gamma +2 \gamma_{5,4})}	& \frac{9 \gamma +2 \gamma_{5,4}}{3 (5 \gamma +2 \gamma_{5,4})}	& 0	&	& \\
\frac{1}{2} - \frac{2}{3}(3a_{4,3}-1) - a_{4,3}	& \frac{2}{3}(3a_{4,3}-1)	& a_{4,3} & 0					& \\
1 &-\frac{3}{2}	& 0	& \frac{3}{2}	& 0\end{bmatrix} \\
\\
\gamma & = & \begin{bmatrix}
0				&					&					&					& \\
-\gamma& \gamma	& 					&					& \\
\frac{2 \left(3 \gamma ^2+\gamma \gamma_{5,4}\right)}{5 \gamma +2\gamma_{5,4}}-\gamma & -\frac{2 \left(3 \gamma ^2+\gamma \gamma_{5,4}\right)}{5 \gamma +2\gamma_{5,4}}& \gamma	& & \\
2 (\gamma +\gamma_{4,3})-\gamma-\gamma_{4,3}& -2 (\gamma +\gamma_{4,3})	& \gamma_{4,3}	& \gamma	& \\
0& 4\gamma +\gamma_{5,4}	& -5\gamma-2\gamma_{5,4},	& \gamma_{5,4}	& \gamma \end{bmatrix} \\
\\
b_i & = & \begin{bmatrix} 1 &-\frac{3}{2}	& 0	& \frac{3}{2}	& 0 \end{bmatrix} \\
g_i & = & \begin{bmatrix} 0& 4\gamma +\gamma_{5,4}	& -5\gamma-2\gamma_{5,4},	& \gamma_{5,4}	& \gamma \end{bmatrix}
\end{array}
\end{equation*}
\caption{Method coefficients for a third order LIRK-W method of type 2.}
\label{LIRKW:table:lirkwcoef2}
\end{table}

\section{Numerical Experiments}
\label{LIRKW:sec:numerics}

We show here the convergence of both type 1 and type 2 LIRK-W schemes when applied to the shallow water equations on a sphere.  The shallow water equations in spherical coordinates are
\begin{subequations}
\begin{eqnarray}
\frac{\partial u}{\partial t} + \frac{1}{a \cos\theta} \left(u\frac{\partial u}{\partial \lambda} + v \cos\theta\frac{\partial u}{\partial \theta}\right) - \left(f + \frac{u\tan\theta}{a}\right)a + \frac{g}{a\cos\theta}\frac{\partial h}{\partial \lambda} & = & 0 \\
\frac{\partial v}{\partial t} + \frac{1}{a\cos\theta}\left(u\frac{\partial v}{\partial \lambda} + v\cos\theta\frac{\partial v}{\partial \theta}\right) + \left(f + \frac{u\tan\theta}{a}\right)u + \frac{g}{a}\frac{\partial h}{\partial\theta} & = & 0 \\
\frac{\partial h}{\partial t} + \frac{1}{a\cos\theta} \left(\frac{\partial (hu)}{\partial \lambda} + \frac{\partial (hv\cos\theta)}{\partial\theta}\right) & = & 0.
\end{eqnarray}
\end{subequations}
Where $f = 2\Omega\sin\theta$, $h$ is the height of the atmosphere, $u$ is the zonal wind component, $v$ is the meridional wind component, $\theta$ and $\lambda$ are the latitudinal and longitudinal directions, $a$ is the radius of the earth, $\Omega$ is the rotational velocity of the earth, and $g$ is the gravitational constant. The space discretization is performed using the unstaggered Turkel-Zwas scheme \cite{Navon_1987_swe, Navon_1991_swe}, with 72 nodes in the longitudinal direction and 36 nodes in the latitudinal direction.  Figure \ref{LIRKW:fig:sweconv} confirms the third order convergence of type 1 and type 2 LIRK-W schemes applied to the spherical shallow water equations, when $\Lb = \J$.


\begin{figure}[h]
\centering
\includegraphics[width=4in]{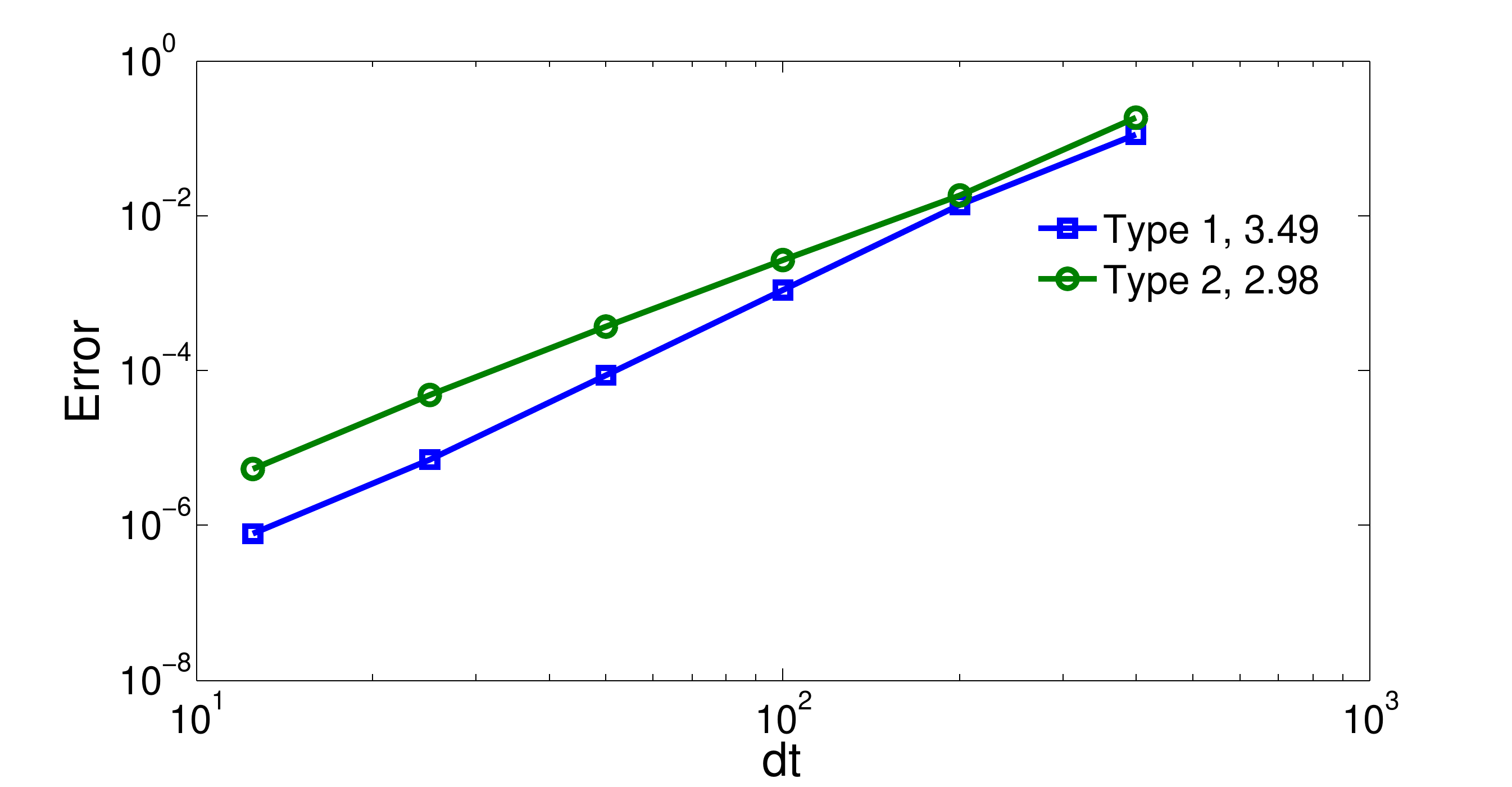}\\
\caption{Convergence diagram for LIRK-W methods applied to spherical shallow water equations.}
\label{LIRKW:fig:sweconv}
\end{figure}

The numerical exploration of these schemes is ongoing, and work is currently being done to apply these methods to large computational fluid dynamics models.  This exploration will include the application of several different formulations of the time dependent, and stage varying linear term $\Lb_i y$.


\section{Conclusions}
\label{LIRKW:sec:conclusions}

We have presented a new class of time-integration schemes, which permit an arbitrary, time dependent, and stage varying linear approximation of the stiff system dynamics, called Linearly-Implicit Runge-Kutta-W methods.  These schemes maintain high-order when making use of an approximate matrix factorization, and can be reformulated to permit other time dependent approximations.  Future work that explores new formulations of the method and order conditions to permit new, or different, approximations may lead to more efficient integrators.

\section*{References}

\bibliographystyle{siam}
\bibliography{Bib/Master,Bib/ode_krylov,Bib/pde_time_implicit,Bib/ode_general,Bib/ode_multirate,Bib/ode_exponential,Bib/ode_rosenbrock,Bib/sandu}
\end{document}